\newcommand{\hilbert}{{\mathbb{H}}}
\newcommand{\randomel}{{L^2(\Omega;\mathbb{H})}}
\newcommand{\randomvar}{{L^2(\Omega;\mathbb{R})}}
\newcommand{\sigmafield}{\mathfrak{F}}
\newcommand{\Ex}{\mathbb{E}}
\newcommand{\s}{{\mathbb{S}^2}}
\newcommand{\hs}{\mathcal{S}_2}
\newcommand{\ls}{L^2(\s; \mathbb{R})}
\newcommand{\ylm}{Y_{\ell, m}}
\newcommand{\summ}{\sum_{m=-\ell}^{\ell}}
\newtheorem{theorem}{Theorem}
\newtheorem{condition}{Condition}
\newtheorem{proposition}{Proposition}
\newtheorem{remark}{Remark}
\newtheorem{definition}{Definition}
\newtheorem{lemma}{Lemma}
\newtheorem{corollary}{Corollary}
\begin{document}

\begin{frontmatter}

\title{Asymptotics for isotropic Hilbert-valued spherical random fields} %\thanksref{T1}
\runtitle{Asymptotics for isotropic Hilbert-valued spherical random fields}
%\thankstext{T1}{To appear in \emph{Statistics \& Probability Letters}}
% Hilbert-valued spherical random fields

\begin{aug}
\author{\fnms{Alessia} \snm{Caponera}\thanksref{t1}\ead[label=e1]{alessia.caponera@epfl.ch}}

\runauthor{Caponera}

\affiliation{\'Ecole Polytechnique F\'ed\'erale de Lausanne}

\address{Institut de Math\'ematiques\\ \'Ecole Polytechnique F\'ed\'erale de Lausanne\\ \printead{e1} }

\thankstext{t1}{The author acknowledges support from SNSF Grant 200020\_207367. Moreover, the author wishes to thank Domenico Marinucci for many insightful discussions and suggestions.}

\end{aug}

\begin{abstract} 
In this paper, we introduce the concept of isotropic Hilbert-valued spherical random field, thus extending the notion of isotropic spherical random field to an infinite-dimensional setting. We then establish a spectral representation theorem and a functional Schoenberg's theorem. Following some key results established for the real-valued case, we prove consistency and quantitative central limit theorem for the sample power spectrum operators in the high-frequency regime.
\end{abstract}

\bigskip
\begin{keyword}[class=AMS]
\kwd{60G05, 60G60, 62M}
\end{keyword}

\begin{keyword}
\kwd{spherical random fields}
\kwd{Hilbert spaces}
\kwd{isotropy}
\kwd{spectral representation}
\kwd{high-frequency asymptotics}
\kwd{quantitative central limit theorem}
\end{keyword}

\end{frontmatter}

\setcounter{tocdepth}{2}
\tableofcontents

\section{Introduction}

Spherical random fields have aroused a growing interest in the last decades due to their important connections with applications, for instance in Cosmology, Astrophysics, Earth and Climate Sciences -- see \cite[Chapter 1]{MP:11} for an overview.
Both the probabilistic and statistical aspects have been studied in many different directions. 

From a purely probabilistic point of view, there is a very rich literature that studies the geometry of isotropic spherical random fields, with particular attention to the asymptotic behavior of related geometric functionals -- see for instance \cite{MW:11, MW:14, MR:15, CM:18b, To:19, Ro:19, To:20, MRW:20, CGR:22} and the references therein.

Another vast research line focuses on covariance functions of spherical random fields, which under isotropy can be reduced to functions on the interval $[-1,1]$. 
I. J. Schoenberg in his seminal paper \cite{schoenberg} characterized the class of positive definite continuous function on $[-1, 1]$ and this result has received considerable attention in the last years, thanks also to the review \cite{gneiting2}. Indeed, it has had an important impact on both the probabilistic and statistical communities since it can be interpreted as the analogue of the Herglotz’s theorem for stationary time series (see for instance \cite{brockwelldavis}). Regularity properties have been investigated  for instance in \cite{Zi:14, Zi:17, langschwab}, while a natural extension of Schoenberg’s theorem to the product space $[-1, 1] \times G$, for $G$ a locally compact group, is given in \cite{bergporcu}. This last result clearly includes the case of isotropic in space and stationary in time covariance functions, which are motivated in particular by Earth and Climate Sciences. Parametric families in these settings have been also widely studied -- for a comprehensive review see \cite{gneiting2,PFN:20} and the references therein.

Very recently, the problem of investigating sequences of serially-dependent spherical random fields and their second-order structure has been also addressed using a functional time series approach in \cite{SPHARMA, cm19, cdv19, CFSP:22}. With functional time series analysis one usually refers to methods and techniques developed for collections of \emph{Hilbert-valued random variables} indexed by integers, where the index is interpreted as time -- see for instance \cite{Panaretos2,Panaretos}. In this framework, the assumption of spatial isotropy can be relaxed (see, e.g., \cite{CFSP:22}), while the hypothesis of temporal stationarity is usually preserved.

In this paper, we aim to adopt a \emph{reverse} point of view, in which the collection of Hilbert-valued random variables is indexed by points on the 2-dimensional sphere, thus extending the notion of isotropic spherical random field to an infinite-dimensional setting. Some real data examples which motivates this approach are presented in Section \ref{sec::zerone}.
Specifically, in Section \ref{sec::spectral-repr}, we introduce the concept of \emph{isotropic Hilbert-valued spherical random field} and \emph{power spectrum operator}. We then establish a spectral representation theorem and a functional Schoenberg's theorem. In Section \ref{sec::high-freq}, following some key results in \cite{MP:ergodicity, MP:11} established for the real-valued case, we prove consistency and \emph{quantitative} central limit theorem for the sample power spectrum operators in the \emph{high-frequency} regime. In Section \ref{sec::back} and in the Appendix we include some technical material.
\\

\section{Notation and background}\label{sec::back}

Let $\mathbb{K}$ be a separable real Hilbert space with norm $\|\cdot\|_{\mathbb{K}}$ and inner product $\langle \cdot, \cdot \rangle_{\mathbb{K}}$, $\mathfrak{B}(\mathbb{K})$ the Borel $\sigma$-field of $\mathbb{K}$.

Given a measure space $(\mathcal{X}, \mathfrak{A}, \mu)$ we define $L^2(\mathcal{X}, \mathfrak{A}, \mu; \mathbb{K})$ to be the Hilbert space of measurable functions $f: (\mathcal{X}, \mathfrak{A}) \to ( \mathbb{K}, \mathfrak{B}( \mathbb{K}))$ such that
$$
\int_{\mathcal{X}} \| f\|^2_{\mathbb{K}} d\mu < \infty.
$$
For simplicity, once the measure space is specified, we will write $L^2(\mathcal{X}; \mathbb{K})$.
When $\mathcal{X}$ is any subset of $\mathbb{R}^d, d \in \mathbb{N}$, we will implicitly consider the measure space with $\sigma$-field the Borel $\sigma$-field of $\mathcal{X}$, denoted by $\mathfrak{B}(\mathcal{X})$, and measure the Lebesgue measure.

If $(\Omega, \mathfrak{F}, \operatorname{P})$ is some probability space, a $\mathbb{K}$-valued random variable $F$ is a measurable
map from $(\Omega, \mathfrak{F})$ to $(\mathbb{K}, \mathfrak{B}(\mathbb{K}))$, and $F$  is Gaussian if, $\forall w \in \mathbb{K}$, $\langle F, w \rangle_\mathbb{K}$ is a real-valued Gaussian random variable.

\paragraph{Hermite polynomials and Wiener chaoses} For any integer $q \ge 0$, let denote by $H_q$ the $q$-th Hermite polynomial
\begin{equation*}
H_{q}(z)=(-1)^{q}e^{z^{2}/2}\frac{d^{q}}{dz^{q}}e^{-z^{2}/2}, \qquad z\in \mathbb{R};
\end{equation*}
for instance, the first few are given by $H_0(z) = 1, H_{1}(z)=z$, $H_{2}(z)=z^{2}-1$, $H_{3}(z)=z^{3}-3z$ and $H_{4}(z)=z^{4}-6z^{2}+3$.
Any finite variance transform of a standard Gaussian random variable $Z$ has a representation in terms of Hermite polynomials (see \cite[Example 2.2.6, p. 27]{noupebook}). This means that, given the probability space $(\Omega, \sigma(Z), \operatorname{P})$ and $\varphi \in L^2(\Omega; \mathbb{R})$, $\varphi$ can be written as an orthogonal series in $L^2(\Omega; \mathbb{R})$ as follows
\begin{equation*}
\varphi=\sum_{q=0}^{\infty }J_{q}(\varphi)\frac{H_{q}(Z)}{q!}, \qquad  J_{q}(\varphi)=\mathbb{E}[\varphi H_{q}(Z)].
\end{equation*}
More generally, given the probability space $(\Omega, \sigma(\mathcal{Z}), \operatorname{P})$, where $\mathcal{Z}$ is any $L^2(\Omega; \mathbb{R})$-closed centered linear Gaussian space, we can write the Stroock-Varadhan decomposition
\begin{equation*}
L^2(\Omega; \mathbb{R})=\bigoplus\limits_{q=0}^{\infty }\mathcal{H}_{q},
\end{equation*}
where $\mathcal{H}_{q}$ is the $q$-th order Wiener chaos, i.e., the space spanned by $q$-th order Hermite polynomials computed in the elements of $\mathcal{Z}$, see \cite[Chapter 2]{noupebook} for more discussions and details.
However, the notion of Wiener chaoses can be extended to general $\mathbb{K}$-valued random variables and it will be used in this paper to prove a \emph{quantitative functional central limit theorem} by means of the rich machinery of Stein-Malliavin methods (see \cite{campese,noupebook}).
To this purpose, we define the following distances between probability measures.
The $d_2$-distance between two $\mathbb{K}$-valued random variables $F$ and $G$  is defined by
$$
d_2(F,G) = \sup_{ \substack{h \in C_b^2(\mathbb{K}) \\ \|h\|_{C_b^2(\mathbb{K})} \le 1}} |\mathbb{E}[h(F)]- \mathbb{E}[h(G)]|,
$$
where $C_b^2(\mathbb{K})$ denotes the twice Fréchet differentiable, real-valued functions on $\mathbb{K}$ with uniformly bounded first and second derivatives (see \cite[Section 2.2.4 and Section 3]{campese}).
While, the total variation distance between two $\mathbb{K}$-valued random variables $F$ and $G$ is defined by
$$
d_{\operatorname{TV}}(F,G) = \sup_{B \in \mathfrak{B}(\mathbb{K})}  |\operatorname{P}[F \in B]- \operatorname{P}[G \in B]|.
$$

\paragraph{Harmonic analysis on the sphere}

Let $\ls$ be the Hilbert space of square-integrable real-valued functions over the unit sphere $\s = \{ x \in \mathbb{R}^3: \|x\|=1\}$. Through the paper, we shall use $\|\cdot\|$ and $\langle \cdot, \cdot \rangle$ to denote the Euclidean norm and inner product on $\mathbb{R}^3$ respectively.

It is well-known that $\ls$ can be decomposed into the direct sum of orthogonal spaces spanned by eigenfunctions of the corresponding Laplacian. More precisely, 
\begin{equation*}
\ls = \bigoplus_{\ell=0}^\infty \mathcal{Y}_\ell,
\end{equation*}
where $ \mathcal{Y}_\ell$ is spanned by the eigenfunctions of the spherical Laplacian 
$\Delta _{\mathbb{S}^{2}}$ associated with the eigenvalue $-\ell(\ell+1)$. These eigenfunctions are called spherical harmonics.
A standard orthonormal basis for the eigenspace $\mathcal{Y}_\ell$ is chosen to be the set of (in this paper, \emph{real-valued}) \emph{fully normalized spherical harmonics} $\{\ylm, \ m=-\ell,\dots,\ell\}$.

One of the most fundamental property of spherical harmonics (which will be also widely used through the paper) is expressed by the following \emph{addition formula}. For any $x,y \in \s$ and any integer $\ell \ge 0$,
\begin{equation*}\label{background::addition}
\summ \ylm(x) \ylm(y) = \frac{2\ell+1}{4\pi} P_{\ell}( \langle x, y \rangle ),
\end{equation*} 
where  $P_{\ell}:[-1,1] \to \mathbb{R}$ is the $\ell$-th Legendre polynomial defined by
\begin{align*}
P_\ell(z) = \frac{1}{2^\ell \ell !} \frac{d^\ell}{d z^\ell}(z^2-1)^\ell, \qquad z \in [-1,1].
\end{align*}
For a proof (based on group representation theory) we refer to \cite[Chapter 3]{MP:11}.

\begin{remark}
Legendre polynomials are orthogonal over $[-1,1]$, i.e.,
\begin{equation*}
\int_{-1}^{-1} P_\ell(z) P_{\ell'}(z) dz  = \frac{2}{2\ell+1} \delta_\ell^{\ell'},
\end{equation*}
here $\delta_a^b$ is the Kronecker delta function. Moreover, the sequence of Legendre polynomials forms an orthogonal basis for $L^2([-1,1]; \mathbb{R})$. Note that $P_\ell(1) = 1$, for all integers $\ell \ge 0$. See \cite{szego} for further details.
\end{remark}

\paragraph{\emph{Ad hoc} notation} In the next sections we will adopt the following notation.
Let $\hilbert$ be a separable real Hilbert space with norm $\|\cdot\|_\hilbert$ and inner product $\langle \cdot, \cdot \rangle_\hilbert$. Let $\mathcal{L}(\hilbert)$ be the set of all bounded linear operators from $\hilbert$ to $\hilbert$, and $\|\cdot\|_\infty$ be the corresponding operator norm.
 
For $1\le p <\infty$, we let $\|\cdot\|_p$ denote the $p$-Schatten norm, defined by
$$\|\mathscr{T}\|_p = \left ( \sum_{j=1}^\infty \langle (\mathscr{T}^* \!\mathscr{T} )^{p/2}  e_j, e_j \rangle_\hilbert\right )^{1/p}, \quad \mathscr{T} \text{ compact} \in \mathcal{L}(\hilbert),$$
for any complete orthonormal system (CONS) $\{e_j,\ j \in \mathbb{N}\}$.
Clearly, $\|\cdot\|_2, \|\cdot\|_1$ are the Hilbert-Schmidt and trace (or nuclear) norms, and if $\|\mathscr{T}\|_1<\infty$ we can also define the trace of $\mathscr{T}$ as
$$
\operatorname{Tr}(\mathscr{T}) = \sum_{j=1}^\infty \langle \mathscr{T}e_j, e_j \rangle_\hilbert.
$$
Moreover, we let $\mathcal{S}_p=\mathcal{S}_p(\hilbert)$ be the set of all compact operators $\mathscr{T} \in \mathcal{L}(\hilbert)$ such that $\|\mathscr{T}\|_p <\infty$. Recall that $\hs$ endowed with the Hilbert-Schmidt inner product, denoted by $\langle \cdot, \cdot \rangle_2$, is a Hilbert space, as well as $\hs^\dag$, the set of Hilbert-Schmidt operators that are also self-adjoint.

For $u,v \in \hilbert$, the tensor product $u \otimes v$ is defined to be the mapping that takes any element $f \in \hilbert$ to $u \langle f, v\rangle_\hilbert \in \hilbert.$
In particular, we shall use the notation $\otimes_2$ to indicate the tensor product on $\hs^\dag$.

\section{Isotropic $\hilbert$-valued spherical random fields: definition and spectral characteristics}\label{sec::spectral-repr}

Consider a collection $T=\{T(x), \ x \in \mathbb{S}^2\}$ of centered $\hilbert$-valued random variables defined on a common probability space $(\Omega, \sigmafield, \operatorname{P})$ and such that $\mathbb{E} \|T(x)\|^2_\hilbert < \infty$, for any $x \in \mathbb{S}^2$. Assume also that the mapping $T:\Omega \times \s \to \hilbert$ is \emph{jointly measurable}, i.e., measurable with respect to the product $\sigma$-field $\mathfrak{B}(\mathbb{S}^2) \times \sigmafield$. We call $T=\{T(x), \ x \in \mathbb{S}^2\}$ \emph{$\hilbert$-valued spherical random field}. 

Now, define as Bochner integrals the autocovariance operators
\begin{equation*}
\mathscr{R}_{x,y}  =  \mathbb{E} \left [ T(x) \otimes T(y) \right ] = \int_{\Omega} T(x) \otimes T(y) \, d  \operatorname{P}, \qquad x,y \in \mathbb{S}^2.
\end{equation*}

\begin{definition}
We say that the collection $T=\{T(x), \ x \in \mathbb{S}^2\}$ of zero-mean $\hilbert$-valued random variables is isotropic if $\mathbb{E} \|T(x) \|_\hilbert^2 < \infty$, for all $x \in \mathbb{S}^2$, and
\begin{equation*}
\mathscr{R}_{x,y} =\mathscr{R}_{\rho x, \rho y}, \qquad \text{for all } x,y \in \mathbb{S}^2, \, \rho \in SO(3),
\end{equation*}
where $SO(3)$ is the special group of rotations in $\mathbb{R}^3$.
\end{definition}
\begin{remark}
It is interesting to observe that, differently from the time-stationary case (see, e.g., \cite{Panaretos2}), such operators are self-adjoint even for $x\ne y$, i.e., $\mathscr{R}_{x,y}=\mathscr{R}_{y,x}$. Indeed, one can consider the axis in the direction of the midpoint along the geodesic path between $x$ and $y$, and a rotation about that axis.
Thus, in this case, as for standard isotropic random fields on the sphere, we can use the equivalent notation $\mathscr{R}_{\langle x, y \rangle}$. 
Lemma \ref{lemma::trace} in the Appendix then allows to write the following upper bound:
$$
\|\mathscr{R}_{\langle x, y \rangle} \|_1 \le \|\mathscr{R}_1 \|_1, \qquad \forall x,y \in \s.
$$
\end{remark}

\begin{remark}
Under joint measurability with the additional assumption of isotropy, by Fubini's theorem one has that 
$$\mathbb{E} \left [ \int_{\mathbb{S}^2 } \|T(x)\|^2_\hilbert d x  \right] = 4 \pi \mathbb{E} \|T(x_0) \|_\hilbert^2 < \infty,$$ 
for any fixed $x_0 \in \mathbb{S}^2$.
% There is a unique positive measure on $\s$ which is invariant by rotation, with total mass $4\pi$. 
This implies that there exists a $\sigmafield$-measurable set $\Omega'$ of $\operatorname{P}$-probability 1 such that, for every $\omega \in \Omega'$,  $T(\omega, \cdot)$ is an element of $L^2(\mathbb{S}^2; \hilbert)$.
\end{remark}

In the following we formally summarize the assumptions we need for our results.
\begin{condition}\label{cond::1}
$T=\{T(x), \ x \in \mathbb{S}^2\}$ is a centered $\hilbert$-valued spherical random field that is also isotropic with autocovariance operators $\mathscr{R}_{\langle \cdot, \cdot \rangle} $.
\end{condition}
%\begin{condition}\label{cond::1}
%$\{T(x), \ x \in \mathbb{S}^2\}$ is a collection of jointly measurable centered $\hilbert$-valued random variables that is also isotropic with autocovariance operators $\mathscr{R}_{\langle \cdot, \cdot \rangle} $.
%\end{condition}

%Moreover if
%\begin{equation*}
%\lim_{x \to x_0} \mathbb{E}[ T(x, u) - T(x_0, u) ]^2 = 0, \qquad \forall x_0 \in \mathbb{S}^2,
%\end{equation*}
%$u$-almost everywhere, then, by Dominated Convergence Theorem, it can be shown that
%\begin{equation}\label{func-meansquare}
%\lim_{x \to x_0} \mathbb{E}\| T(x, \cdot) - T(x_0, \cdot) \|_\hilbert^2 = 0, \qquad \forall x_0 \in \mathbb{S}^2,
%\end{equation}
%(mean-square continuity).

Now, we are able to define the infinite-dimensional equivalent of the power spectrum of real-valued spherical random fields. This infinite-dimensional power spectrum will play a fundamental role in the rest of the paper. It will be key indeed in the derivation of the spectral representation theorem (Theorem \ref{theo::spectral-sphere}) and in the characterization of the covariance operators (Theorem \ref{theo::schoenberg}), presented in this section. Moreover, in Section \ref{sec::high-freq}, we will provide asymptotic results for its sample counterpart.

\begin{proposition}[Power spectrum operator]\label{prop::ps}
Under Condition \ref{cond::1}, for any integer $\ell \ge 0$, the power spectrum operator $\mathscr{F}_\ell: \hilbert \to \hilbert$ defined by
$$
\mathscr{F}_\ell = \int_{\s \times \s}  \mathscr{R}_{\langle x, y\rangle} Y_{\ell,0}(x)  Y_{\ell,0}(y)dx dy  
$$
is sefl-adjoint, nonnegative definite and nuclear.
\end{proposition}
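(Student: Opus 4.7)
The plan is to establish the identification
$$\mathscr{F}_\ell = \mathbb{E}\bigl[a_{\ell,0} \otimes a_{\ell,0}\bigr], \qquad a_{\ell,0} := \int_\s T(x)\, Y_{\ell, 0}(x)\, dx,$$
where $a_{\ell,0}$ is defined as a Bochner integral in $\hilbert$. Once this holds, all three claimed properties follow at once from the structure of the rank-one operator $u \otimes u$: it is self-adjoint, nonnegative, and nuclear with $\|u \otimes u\|_1 = \|u\|_\hilbert^2$.

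First, I would check that $a_{\ell,0}$ is a well-defined $\hilbert$-valued random variable with finite second moment. The joint measurability of $T$, together with the $L^2(\s;\hilbert)$-integrability observed in the previous Remark, makes $x \mapsto T(\omega, x)\, Y_{\ell, 0}(x)$ Bochner integrable for $\operatorname{P}$-almost every $\omega$. Cauchy--Schwarz and Fubini then give
$$\mathbb{E}\,\|a_{\ell,0}\|_\hilbert^2 \le \|Y_{\ell, 0}\|_{\ls}^{\,2} \cdot \mathbb{E}\!\int_\s \|T(x)\|_\hilbert^2\, dx = 4\pi\, \mathbb{E}\,\|T(x_0)\|_\hilbert^2 < \infty$$
for any fixed $x_0 \in \s$.

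Second, I would justify the Fubini-type interchange
$$\mathbb{E}\bigl[a_{\ell,0} \otimes a_{\ell,0}\bigr] = \mathbb{E}\!\int_{\s \times \s} T(x) \otimes T(y)\, Y_{\ell,0}(x) Y_{\ell,0}(y)\, dx\,dy = \int_{\s \times \s} \mathscr{R}_{\langle x, y \rangle}\, Y_{\ell,0}(x) Y_{\ell,0}(y)\, dx\,dy = \mathscr{F}_\ell.$$
Since $\|T(x) \otimes T(y)\|_1 = \|T(x)\|_\hilbert \|T(y)\|_\hilbert$, the joint Bochner integrability of the integrand in $\mathcal{S}_1$ on $\s \times \s \times \Omega$ is controlled by the bound above, which permits the exchange of the expectation with the spatial integrals.

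Finally, applied to $u = a_{\ell,0}(\omega)$, the properties of rank-one operators pass to $\mathscr{F}_\ell = \mathbb{E}[a_{\ell,0} \otimes a_{\ell,0}]$ by the closedness of the self-adjoint, nonnegative, trace-class cone under Bochner integration in $\mathcal{S}_1$; nonnegativity is made explicit by $\langle \mathscr{F}_\ell w, w\rangle_\hilbert = \mathbb{E}\,\langle a_{\ell,0}, w\rangle_\hilbert^2 \ge 0$ for every $w\in\hilbert$, and nuclearity by $\operatorname{Tr}(\mathscr{F}_\ell) = \mathbb{E}\,\|a_{\ell,0}\|_\hilbert^2 < \infty$. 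The main technical obstacle is the rigorous verification of this Fubini exchange in $\mathcal{S}_1$; however, strong measurability of $\omega \mapsto a_{\ell,0}(\omega)$ and joint integrability of the rank-one integrand both reduce to the joint measurability of $T$ and the uniform second-moment control obtained from isotropy.
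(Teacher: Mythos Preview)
Your proposal is correct and follows essentially the same approach as the paper: both identify $\mathscr{F}_\ell$ as the covariance operator $\mathbb{E}[a_{\ell,0} \otimes a_{\ell,0}]$ of the Fourier coefficient $a_{\ell,0} = \int_\s T(x)\, Y_{\ell,0}(x)\,dx$ via a Fubini argument, from which the claimed properties follow. The paper additionally establishes the uncorrelation identity \eqref{eq::uncorr} within this proof for later use and derives self-adjointness and nuclearity via direct bounds on $\mathscr{R}_{\langle x,y\rangle}$ rather than solely through the covariance identification, but these are minor variations on the same idea.
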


\begin{remark}
We will denote by $\mathcal{C}_\ell$ the nuclear norm of $\mathscr{F}_\ell$, which is given by
$$\mathcal{C}_\ell = \|\mathscr{F}_\ell\|_1= \int_{\s \times \s} \operatorname{Tr}( \mathscr{R}_{\langle x, y\rangle} )Y_{\ell,0}(x)  Y_{\ell,0}(y)dx dy.$$ 
The sequence of nuclear norms $\{\mathcal{C}_\ell, \ell \ge 0\}$ will be called \emph{reduced power spectrum}.
\end{remark}

\begin{proof}[Proof of Proposition \ref{prop::ps}]
First note that, for any $(\ell,m)$ and $(\ell',m')$, the Bochner integral
$$
\int_{\s \times \s}  \mathscr{R}_{\langle x, y\rangle} Y_{\ell,m}(x)   Y_{\ell',m'}(y)dx dy
$$
is well defined on $\hs$. Indeed, 
$$
\int_{\s \times \s}  \|\mathscr{R}_{\langle x, y\rangle}\|_2 |Y_{\ell,m}(x)|  | Y_{\ell',m'}(y)| dx dy \le4\pi \|\mathscr{R}_1\|_1 < \infty.
$$
Moreover, for $f \in \hilbert$,
$$
\left ( \int_{\s \times \s}  \mathscr{R}_{\langle x, y\rangle} Y_{\ell,m}(x)  Y_{\ell',m'}(y)dx dy  \right) f =\int_{\s \times \s}  (\mathscr{R}_{\langle x, y\rangle}f )Y_{\ell,m}(x)  Y_{\ell',m'}(y)dx dy
$$
(see \cite[Example 4.4.6]{Hsing}), and since the inner product is continuous
$$
\left \langle \int_{\s \times \s}  (\mathscr{R}_{\langle x, y\rangle}f ) Y_{\ell,m}(x)  Y_{\ell',m'}(y)dx dy , g \right \rangle_\hilbert= \int_{\s \times \s}  \langle \mathscr{R}_{\langle x, y\rangle}f, g\rangle_\hilbert Y_{\ell,m}(x)  Y_{\ell',m'}(y)dx dy,
$$
for all $f,g \in \hilbert$. 

Define
$$
\mathscr{F}_\ell = \int_{\s \times \s}  \mathscr{R}_{\langle x, y\rangle} Y_{\ell,0}(x)  Y_{\ell,0}(y)dx dy.
$$
We can prove that, for any $(\ell,m)$ and $(\ell',m')$,
\begin{equation}\label{eq::uncorr}
\int_{\s \times \s}  \mathscr{R}_{\langle x, y\rangle} Y_{\ell,m}(x)  Y_{\ell',m'}(y)dx dy =
\begin{cases}
\mathscr{F}_\ell &\ell=\ell' \text{ and } m=m'\\
0 &\text{otherwise}
\end{cases}.
\end{equation}
Indeed, for any $f,g\in \hilbert$, the function $(x,y) \mapsto \langle \mathscr{R}_{\langle x, y\rangle}f, g\rangle_\hilbert$ belongs to $L^2(\s \times \s; \mathbb{R})$ and
$$
\int_{\s \times \s}  \langle \mathscr{R}_{\langle x, y\rangle}f, g\rangle_\hilbert Y_{\ell,m}(x)  Y_{\ell',m'}(y)dx dy = \begin{cases}
\int_{\s \times \s}  \langle \mathscr{R}_{\langle x, y\rangle}f, g\rangle_\hilbert Y_{\ell,0}(x)  Y_{\ell,0}(y)dx dy &\ell=\ell' \text{ and } m=m'\\
0 &\text{otherwise}
\end{cases}.
$$

For any integer $\ell\ge 0$, the operator $\mathscr{F}_\ell$ is self-adjoint since $\mathscr{R}_{\langle x, y\rangle}=\mathscr{R}_{\langle y, x\rangle}$, and nonnegative definite since
\begin{align*}
\int_{\s \times \s}  \mathscr{R}_{\langle x, y\rangle} Y_{\ell,0}(x)  Y_{\ell,0}(y)dx dy &= \mathbb{E} \left[ \int_{\s \times \s}  T(x) \otimes T(y) Y_{\ell,0}(x)  Y_{\ell,0}(y) dx dy \right]\\
&= \mathbb{E}\left[\int_\s T(x) Y_{\ell,0}(x) dx  \otimes \int_\s T(y)  Y_{\ell,0}(y)  dy \right],
\end{align*}
which is a covariance operator. Moreover, $\mathscr{F}_\ell$ is nuclear since
$$
\|\mathscr{F}_\ell\|_1 \le \int_{\s \times \s}  \|\mathscr{R}_{\langle x, y\rangle}\|_1 |Y_{\ell,0}(x)|  | Y_{\ell,0}(y)| dx dy \le 4\pi \|\mathscr{R}_1\|_1 < \infty.
$$
\end{proof}

As for the real-valued case \cite[Theorem 5.13]{MP:11}, we are able to derive a crucial result, that is, a spectral representation theorem for isotropic $\hilbert$-valued spherical random fields. 

\begin{theorem}[Spectral representation theorem]\label{theo::spectral-sphere}
Let $T=\{T(x), \ x \in \s\}$ as defined in Condition \ref{cond::1}. Then, it admits the representation
\begin{equation}\label{cramer}
T(x) = \sum_{\ell=0}^\infty \sum_{m=-\ell}^{\ell} a_{\ell,m}  Y_{\ell, m}(x),
\end{equation}
where, for fixed $(\ell, m)$, $a_{\ell,m}$ is a zero-mean $\hilbert$-valued random variable with $\mathbb{E}\left [a_{\ell,m} \otimes a_{\ell,m} \right] = \mathscr{F}_\ell$ and
\begin{equation*}
\mathbb{E}\left [a_{\ell,m} \otimes a_{\ell',m'} \right] = 0, \qquad \text{for } \ell \ne \ell', \, m \ne m'.
\end{equation*}
The convergence of the infinite series in \eqref{cramer} is both in the sense of $L^2(\Omega \times \s ; \hilbert)$ and $L^2(\Omega ; \hilbert)$ for every fixed $x \in \s$, that is,
\begin{equation}\label{expansion-spherel2}
\mathbb{E}\int_\s \left \| T(x) - \sum_{\ell=0}^L \sum_{m=-\ell}^{\ell} a_{\ell,m}  Y_{\ell, m}(x) \right \|^2_\hilbert dx \to 0, \qquad L \to \infty,
\end{equation}
and for every fixed $x \in \s$
\begin{equation}\label{expansion-sphere}
\mathbb{E}\left \| T(x) - \sum_{\ell=0}^L \sum_{m=-\ell}^{\ell} a_{\ell,m}  Y_{\ell, m}(x) \right \|^2_\hilbert \to 0, \qquad L \to \infty.
\end{equation}
\end{theorem}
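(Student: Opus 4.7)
The plan is to set
$$
a_{\ell,m} = \int_\s T(x) \ylm(x) dx,
$$
interpreted as a Bochner integral. This makes sense pathwise on the full-measure set $\Omega'$ identified in the Remark preceding Condition \ref{cond::1} (on which $T(\omega,\cdot) \in L^2(\s;\hilbert) \subset L^1(\s;\hilbert)$). Joint measurability of $T$ makes $a_{\ell,m}$ into an $\hilbert$-valued random variable, clearly centered. Fubini for Bochner integrals yields
$$
\mathbb{E}[a_{\ell,m} \otimes a_{\ell',m'}] = \int_{\s \times \s} \mathscr{R}_{\langle x,y\rangle} \ylm(x) Y_{\ell',m'}(y) dx dy,
$$
and the orthogonality identity \eqref{eq::uncorr} already established inside the proof of Proposition \ref{prop::ps} produces the announced covariance structure $\mathscr{F}_\ell \delta_\ell^{\ell'} \delta_m^{m'}$.

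For \eqref{expansion-spherel2}, I would invoke the pathwise Fourier expansion in $L^2(\s;\hilbert)$: since $\{\ylm\}$ is a CONS of $\ls$, every $f \in L^2(\s;\hilbert)$ admits a convergent expansion $f = \sumlm f_{\ell,m} \ylm$ with $\hilbert$-valued coefficients $f_{\ell,m} = \int f \ylm dx$, and Parseval reads $\int_\s \|f\|_\hilbert^2 dx = \sumlm \|f_{\ell,m}\|_\hilbert^2$. Applying this on $\Omega'$ and then taking (monotone) expectation gives
$$
\mathbb{E} \int_\s \left\| T(x) - \sum_{\ell=0}^L \summ a_{\ell,m} \ylm(x) \right\|_\hilbert^2 dx = \sum_{\ell > L} (2\ell+1) \operatorname{Tr}(\mathscr{F}_\ell),
$$
whose tail vanishes once we know the full sum is finite. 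This finiteness follows either from a direct Fubini computation using isotropy ($\mathbb{E}\int_\s \|T(x)\|_\hilbert^2 dx = 4\pi \operatorname{Tr}(\mathscr{R}_1) < \infty$) or, equivalently, from Proposition \ref{prop::ps} via $\sum_\ell(2\ell+1)\mathcal{C}_\ell = 4\pi \operatorname{Tr}(\mathscr{R}_1)$.

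The delicate piece is the pointwise statement \eqref{expansion-sphere}, because at a fixed $x$ the $L^2(\s)$-orthogonality of the $\ylm$ is no longer available. Here I would fix a CONS $\{e_j\}_{j \in \mathbb{N}}$ of $\hilbert$ and consider the coordinate fields $T_j(x) = \langle T(x), e_j \rangle_\hilbert$. Each $T_j$ is a centered real-valued isotropic spherical random field with variance $\langle \mathscr{R}_1 e_j, e_j\rangle_\hilbert$, and its scalar harmonic coefficients are exactly $\langle a_{\ell,m}, e_j\rangle_\hilbert$, so the classical Theorem 5.13 of \cite{MP:11} gives $\mathbb{E}|T_j(x) - T_{L,j}(x)|^2 \to 0$ at every fixed $x$, where $T_{L,j}(x)$ denotes the $L$-truncated partial sum. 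A short direct computation using the addition formula and the scalar analogue of \eqref{eq::uncorr} shows moreover that
$$
\mathbb{E}|T_j(x) - T_{L,j}(x)|^2 = \left\langle \Big(\mathscr{R}_1 - \sum_{\ell \le L} \tfrac{2\ell+1}{4\pi}\mathscr{F}_\ell\Big) e_j, e_j \right\rangle_\hilbert \le \langle \mathscr{R}_1 e_j, e_j\rangle_\hilbert,
$$
so the sequence in $j$ is dominated by the summable family with total mass $\operatorname{Tr}(\mathscr{R}_1)$. Dominated convergence on the $j$-sum in
$$
\mathbb{E}\left\| T(x) - \sum_{\ell=0}^L \summ a_{\ell,m} \ylm(x) \right\|_\hilbert^2 = \sum_{j=1}^\infty \mathbb{E}|T_j(x) - T_{L,j}(x)|^2
$$
then yields \eqref{expansion-sphere}. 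The main obstacle is precisely this exchange of the limit in $L$ with the sum over coordinates of $\hilbert$; the key to resolving it is the uniform-in-$L$ bound extracted from the scalar Parseval identity applied coordinatewise.
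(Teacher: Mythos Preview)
Your argument is correct. The definition of the $a_{\ell,m}$ and the derivation of their covariance structure match the paper's exactly. The two proofs diverge, in an interesting mirror-image way, on the two convergence statements.

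For \eqref{expansion-spherel2} you argue more directly than the paper: you use that $\{\ylm\}$ is a CONS of $\ls$, hence gives a pathwise Parseval identity in $L^2(\s;\hilbert)$, and then take expectation. The paper instead fixes a CONS $\{e_j\}$ of $\hilbert$, applies the scalar $L^2(\s)$ Fourier expansion to each coordinate $\langle T(\cdot),e_j\rangle_\hilbert$, and closes with dominated convergence over $j$. Your route is shorter.

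For \eqref{expansion-sphere} the roles reverse. The paper computes directly that
\[
\mathbb{E}\Big\| T(x) - \sum_{\ell\le L}\summ a_{\ell,m}\ylm(x)\Big\|_\hilbert^2
=\|\mathscr{R}_1\|_1 - \sum_{\ell\le L}\tfrac{2\ell+1}{4\pi}\,\mathcal{C}_\ell,
\]
notes this is constant in $x$, and deduces convergence by averaging over $\s$ and invoking \eqref{expansion-spherel2}. You instead project onto a CONS of $\hilbert$, cite the scalar spectral theorem for each $T_j$, and use dominated convergence in $j$. This is valid, but notice that your own coordinatewise identity $\mathbb{E}|T_j(x)-T_{L,j}(x)|^2=\big\langle(\mathscr{R}_1-\sum_{\ell\le L}\tfrac{2\ell+1}{4\pi}\mathscr{F}_\ell)e_j,e_j\big\rangle_\hilbert$, once summed over $j$, \emph{is} the paper's formula; the appeal to the scalar theorem and to dominated convergence then becomes redundant. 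The paper's route here is shorter and self-contained, since it does not import the real-valued result.
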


\begin{proof} [Proof of Theorem \ref{theo::spectral-sphere}]
We first note that, for any $(\ell,m)$, we can define the $\hilbert$-valued random variable $a_{\ell,m}$ as the Bochner integral
$$
a_{\ell,m} = \int_\s T(x) Y_{\ell,m}(x) dx;
$$
indeed 
$$
\int_\s \|T(x)\|_\hilbert |Y_{\ell,m}(x)| dx \le  \left (\int_\s \|T(x)\|_\hilbert^2\, dx \right)^{1/2} < \infty.
$$
Moreover, given any $f \in \hilbert$,
$$
\langle a_{\ell, m}, f \rangle_\hilbert  = \int_\s  \langle T(x), f \rangle_\hilbert Y_{\ell,m}(x) dx ,
$$
by continuity of inner product and properties of Bochner integrals.

Clearly, $\mathbb{E}[a_{\ell,m}] = 0$ and
$$
\mathbb{E}[ a_{\ell, m} \otimes  a_{\ell', m'} ] = \int_{\s \times \s}  \mathscr{R}_{\langle x, y\rangle} Y_{\ell,m}(x)  Y_{\ell',m'}(y)dx dy =
\begin{cases}
\mathscr{F}_\ell &\ell=\ell' \text{ and } m=m'\\
0 &\text{otherwise}
\end{cases},
$$
see Equation \eqref{eq::uncorr}.
%Moreover, $\mathcal{C}_\ell= \mathbb{E}\|a_{\ell,m}\|^2_\hilbert = \|\mathscr{F}_\ell\|_1$.

Consider now any complete orthonormal system $\{e_j, \  j\in \mathbb{N}\}$ of $\hilbert$ and observe that, 
\begin{align*}
&\mathbb{E} \left[ \int_\s \left \| T(x) - \sum_{\ell=0}^L \sum_{m=-\ell}^{\ell} a_{\ell,m}  Y_{\ell, m}(x) \right \|^2_\hilbert dx  \right] \\
=& \mathbb{E}\left[ \sum_{j=1}^\infty \int_\s\left( \langle T(x), e_j \rangle_\hilbert - \sum_{\ell=0}^L \sum_{m=-\ell}^\ell \langle a_{\ell, m}, e_j \rangle_\hilbert Y_{\ell,m}(x) \right)^2 dx  \right],
\end{align*}
with
$$
\langle a_{\ell, m}, e_j \rangle_\hilbert  = \int_\s  \langle T(x), e_j \rangle_\hilbert Y_{\ell,m}(x) dx .
$$
For any $j \in \mathbb{N}$, almost surely it holds that
$$
\int_\s \langle T(x), e_j \rangle^2_\hilbert \, dx \le \int_\s \|T(x) \|^2_\hilbert \, dx < \infty,
$$
thus $\langle T(x), e_j \rangle_\hilbert$ is almost surely in $L^2(\s; \mathbb{R})$. As a consequence, 
\begin{equation}\label{ej-conv}
\int_\s\left( \langle T(x), e_j \rangle_\hilbert - \sum_{\ell=0}^L \sum_{m=-\ell}^\ell \langle a_{\ell, m}, e_j \rangle_\hilbert Y_{\ell,m}(x) \right)^2 dx \to 0, \qquad L\to \infty.
\end{equation}
From \eqref{ej-conv}, we can assert that
\begin{align*}
\int_\s\left( \langle T(x), e_j \rangle_\hilbert - \sum_{\ell=0}^L \sum_{m=-\ell}^\ell \langle a_{\ell, m}, e_j \rangle_\hilbert Y_{\ell,m}(x) \right)^2 dx &= \sum_{\ell=L+1}^\infty \langle a_{\ell, m}, e_j \rangle_\hilbert^2\\
&\le  \sum_{\ell=0}^\infty \langle a_{\ell, m}, e_j \rangle_\hilbert^2\\
& = \int_{\s} \langle T(x), e_j \rangle_\hilbert^2 \, dx < \infty,
\end{align*}
and since
$$
\mathbb{E}\left [\sum_{j=1}^\infty \int_{\s} \langle T(x), e_j \rangle_\hilbert^2 \, dx \right]  = \mathbb{E} \left [\int_\s \|T(x)\|_\hilbert^2\, dx  \right]< \infty,
$$
the proof of \eqref{expansion-spherel2} is concluded by using dominated convergence.

Now, we prove that
$$
\mathbb{E}\left  \| T(x) - \sum_{\ell=0}^L \sum_{m=-\ell}^\ell a_{\ell, m} Y_{\ell,m}(x) \right\|^2_\hilbert
$$
is constant with respect to $x$, by action of isotropy.
We have that
\begin{align*}
&\mathbb{E}\left  \| T(x) - \sum_{\ell=0}^L \sum_{m=-\ell}^\ell a_{\ell, m} Y_{\ell,m}(x) \right\|^2_\hilbert  \\ 
=\,& \mathbb{E}  \| T(x) \|^2_\hilbert + \sum_{\ell=0}^L \sum_{m=-\ell}^\ell \sum_{\ell'=0}^L \sum_{m'=-\ell'}^{\ell'} \mathbb{E}\langle a_{\ell, m}, a_{\ell',m'} \rangle_\hilbert Y_{\ell,m}(x) Y_{\ell',m'}(x) \\
-\, & 2 \sum_{\ell=0}^L \sum_{m=-\ell}^\ell \langle T(x), a_{\ell,m} \rangle_\hilbert Y_{\ell,m}(x)\\
=\,&\|\mathscr{R}_1\|_1 - \sum_{\ell=0}^L  \mathcal{C}_\ell \frac{2\ell+1}{4\pi}.
 \end{align*}
Hence,
$$
\mathbb{E}\left  \| T(x) - \sum_{\ell=0}^L \sum_{m=-\ell}^\ell a_{\ell, m} Y_{\ell,m}(x) \right\|^2_\hilbert = \frac{1}{4\pi}\mathbb{E}  \int_\s \left  \| T(y) - \sum_{\ell=0}^L \sum_{m=-\ell}^\ell a_{\ell, m} Y_{\ell,m}(y) \right\|^2_\hilbert dy,
$$
so that the conclusion is immediately deduced from \eqref{expansion-spherel2}.
Note that
\begin{equation}\label{sumCl}
\mathbb{E}  \|T(x)\|_\hilbert^2  = \|\mathscr{R}_1\|_{1}= \sum_{\ell=0}^\infty \mathcal{C_\ell} \frac{2\ell+1}{4\pi}  < \infty.
\end{equation}
\end{proof}

Now, in the spirit of \cite[Theorem 5.1]{marinucci2013mean} and \cite[Proposition 2]{SPHARMA}, we prove continuity of $\mathscr{R}_{\langle \cdot,\cdot \rangle}$ on $\s \times \s$ and we show that an expansion in terms of the sequence of power spectrum operators holds uniformly.

\begin{theorem}[Functional Schoenberg's theorem]\label{theo::schoenberg}
Under Condition \ref{cond::1}, the mapping $(x,y) \mapsto  \mathscr{R}_{\langle x,y \rangle}$ is continuous on $\s \times \s$, i.e.,  
$$
(x_n, y_n) \to (x_0,y_0) \implies \|\mathscr{R}_{\langle x_n,y_n\rangle} - \mathscr{R}_{\langle x_0, y_0\rangle}\|_1 \to 0, 
$$
and 
$$
\sup_{(x,y) \in \s \times \s} \left \| \mathscr{R}_{\langle x,y \rangle} -  \sum_{\ell=0}^L \mathscr{F}_{\ell} \frac{2\ell+1}{4\pi} P_\ell(\langle x, y\rangle)\right \|_1 \to 0, \qquad L\to\infty.
$$
\end{theorem}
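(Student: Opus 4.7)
The plan is to derive the uniform Legendre expansion by approximating $T$ with its truncated harmonic sum, comparing the associated covariance operators in trace norm, and then deducing continuity as a byproduct of uniform convergence of continuous partial sums.

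First I would set $T_L(x) := \sum_{\ell=0}^L \summ a_{\ell,m} Y_{\ell,m}(x)$. The orthogonality $\Ex[a_{\ell,m}\otimes a_{\ell',m'}] = \mathscr{F}_\ell$ when $(\ell,m)=(\ell',m')$ and zero otherwise, established in Theorem \ref{theo::spectral-sphere}, together with the addition formula, yields
$$
\Ex[T_L(x) \otimes T_L(y)] = \sum_{\ell=0}^L \summ \mathscr{F}_\ell Y_{\ell,m}(x) Y_{\ell,m}(y) = \sum_{\ell=0}^L \mathscr{F}_\ell \frac{2\ell+1}{4\pi} P_\ell(\langle x,y\rangle).
$$
I would then split the residual into two bilinear pieces,
$$
\mathscr{R}_{\langle x,y\rangle} - \Ex[T_L(x)\otimes T_L(y)] = \Ex\bigl[(T(x) - T_L(x)) \otimes T(y)\bigr] + \Ex\bigl[T_L(x) \otimes (T(y) - T_L(y))\bigr],
$$
and estimate each term in $\|\cdot\|_1$.

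The estimate rests on the rank-one identity $\|u \otimes v\|_1 = \|u\|_\hilbert \|v\|_\hilbert$, the triangle inequality for Bochner integrals, and the Cauchy-Schwarz inequality, which together give
$$
\|\Ex[U \otimes V]\|_1 \le \Ex\bigl[\|U\|_\hilbert \|V\|_\hilbert\bigr] \le \bigl(\Ex\|U\|_\hilbert^2\bigr)^{1/2} \bigl(\Ex\|V\|_\hilbert^2\bigr)^{1/2}.
$$
By isotropy and identity \eqref{sumCl}, $\Ex\|T(x)\|_\hilbert^2 = \|\mathscr{R}_1\|_1$ for every $x$, and, by Parseval in the truncated expansion, $\Ex\|T_L(x)\|_\hilbert^2 \le \|\mathscr{R}_1\|_1$ uniformly in $x$. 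Moreover, the computation carried out inside the proof of Theorem \ref{theo::spectral-sphere} shows that $\Ex\|T(x) - T_L(x)\|_\hilbert^2$ is constant in $x$ and tends to $0$ as $L\to\infty$ by \eqref{expansion-sphere}. Plugging these three facts into the Cauchy-Schwarz bound produces an upper estimate on the trace norm of the residual which is independent of $(x,y)$ and vanishes as $L\to\infty$, which is precisely the asserted uniform convergence.

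Continuity of $(x,y) \mapsto \mathscr{R}_{\langle x,y\rangle}$ in trace norm then comes essentially for free: each partial sum $(x,y) \mapsto \sum_{\ell=0}^L \frac{2\ell+1}{4\pi}\mathscr{F}_\ell P_\ell(\langle x,y\rangle)$ is a continuous map from $\s \times \s$ to $(\mathcal{S}_1,\|\cdot\|_1)$, since $(x,y)\mapsto \langle x,y\rangle$ is continuous, each $P_\ell$ is a polynomial, and scalar multiplication by the fixed nuclear operator $\mathscr{F}_\ell$ is continuous in $\|\cdot\|_1$. A uniform limit of $\mathcal{S}_1$-valued continuous maps is continuous, so the first assertion follows from the second. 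I expect the main technical checkpoint to be the clean justification of the rank-one trace-norm identity under Bochner expectation and the interchange of the expectation with the tensor product when bounding the residual; beyond that, the argument is a direct consequence of the spectral representation together with isotropy.
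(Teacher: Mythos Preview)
Your argument is correct, and it takes a genuinely different route from the paper's proof.

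The paper proceeds in two separate steps. First, it uses $|P_\ell|\le 1$ and the summability $\sum_\ell \mathcal{C}_\ell\frac{2\ell+1}{4\pi}<\infty$ from \eqref{sumCl} to show that the partial sums form a Cauchy sequence in $C_b(\s\times\s;\mathcal{S}_1)$, hence converge uniformly to \emph{some} continuous limit. Second, it identifies that limit with $\mathscr{R}_{\langle x,y\rangle}$ by invoking the duality characterization $\|\mathscr{T}\|_1=\sup_{\|D\|_\infty\le 1}|\operatorname{Tr}(D\mathscr{T})|$, establishing weak (i.e., $\langle D\,\cdot\, f,g\rangle_\hilbert$) convergence pointwise via Theorem \ref{theo::spectral-sphere}, and then bounding the tail termwise to obtain the explicit estimate $\sum_{\ell>L}\|\mathscr{F}_\ell\|_1\frac{2\ell+1}{4\pi}$.

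Your approach bypasses both the Cauchy argument and the trace-norm duality entirely: the bilinear splitting plus the rank-one identity $\|u\otimes v\|_1=\|u\|_\hilbert\|v\|_\hilbert$ and Cauchy--Schwarz give a direct bound on $\|\mathscr{R}_{\langle x,y\rangle}-\Ex[T_L(x)\otimes T_L(y)]\|_1$ in terms of $\Ex\|T-T_L\|_\hilbert^2$, and isotropy (via the computation inside the proof of Theorem \ref{theo::spectral-sphere}) makes this bound automatically uniform in $(x,y)$. This is more elementary and shorter. The only thing the paper's route buys is the sharper explicit tail bound $\sum_{\ell>L}\mathcal{C}_\ell\frac{2\ell+1}{4\pi}$, whereas your bound is essentially its square root times $\|\mathscr{R}_1\|_1^{1/2}$; for the qualitative statement of the theorem this makes no difference.
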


\begin{corollary}[Mean-square continuity]\label{prop::continuity} $T=\{T(x), \ x \in \s \}$ is mean-square continuous, i.e.,
\begin{equation}\label{func-meansquare} 
\lim_{x \to x_0} \mathbb{E}\| T(x) - T(x_0) \|_\hilbert^2 = 0, \qquad \forall x_0 \in \mathbb{S}^2.
\end{equation}
Note that $\mathbb{E}\| T(x) - T(x_0) \|_\hilbert^2 = \| \mathbb{E} (T(x) - T(x_0) ) \otimes ( T(x) - T(x_0) ) \|_1$ (convergence in trace-norm to the zero operator).
\end{corollary}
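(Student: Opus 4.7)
The plan is to reduce mean-square continuity to continuity of the autocovariance operators in trace norm, which is exactly the content of Theorem \ref{theo::schoenberg}. First I would expand the tensor product inside the expectation by linearity and the definition of the autocovariance operators:
$$
\mathbb{E}(T(x) - T(x_0)) \otimes (T(x) - T(x_0)) = \mathscr{R}_{\langle x,x\rangle} - \mathscr{R}_{\langle x,x_0\rangle} - \mathscr{R}_{\langle x_0,x\rangle} + \mathscr{R}_{\langle x_0,x_0\rangle},
$$
and since $\langle x,x\rangle = \langle x_0,x_0\rangle = 1$ and $\mathscr{R}_{\langle x,y\rangle} = \mathscr{R}_{\langle y,x\rangle}$, this simplifies to $2(\mathscr{R}_1 - \mathscr{R}_{\langle x,x_0\rangle})$.

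Next I would justify the identity stated in the corollary: for any $u \in \hilbert$, the rank-one operator $u \otimes u$ is nonnegative with $\operatorname{Tr}(u\otimes u) = \|u\|_\hilbert^2 = \|u\otimes u\|_1$. Taking expectation (using that Bochner integrals commute with the trace of nonnegative operators) gives
$$
\mathbb{E}\|T(x) - T(x_0)\|_\hilbert^2 = \operatorname{Tr}\bigl(\mathbb{E}(T(x)-T(x_0))\otimes(T(x)-T(x_0))\bigr) = \bigl\|\mathbb{E}(T(x)-T(x_0))\otimes(T(x)-T(x_0))\bigr\|_1,
$$
where the last equality holds because the operator in question is nonnegative and trace-class.

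Finally, I would combine the two steps. By the expansion,
$$
\mathbb{E}\|T(x) - T(x_0)\|_\hilbert^2 = \bigl\|2(\mathscr{R}_1 - \mathscr{R}_{\langle x,x_0\rangle})\bigr\|_1 \le 2\,\bigl\|\mathscr{R}_{\langle x_0,x_0\rangle} - \mathscr{R}_{\langle x,x_0\rangle}\bigr\|_1,
$$
and by the continuity part of Theorem \ref{theo::schoenberg}, the right-hand side tends to zero as $x \to x_0$. There is really no hard step here: all substance lies in Theorem \ref{theo::schoenberg}, and the only minor point to take care of is the identification of $\mathbb{E}\|T(x)-T(x_0)\|_\hilbert^2$ with the trace norm of the corresponding nonnegative operator, which follows immediately from $\|u\otimes u\|_1 = \|u\|_\hilbert^2$ together with the fact that expectation preserves nonnegativity.
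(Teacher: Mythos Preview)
Your proposal is correct and follows essentially the same route as the paper: expand the covariance of the increment to obtain $\mathbb{E}\|T(x)-T(x_0)\|_\hilbert^2 = 2\|\mathscr{R}_1\|_1 - 2\operatorname{Tr}(\mathscr{R}_{\langle x,x_0\rangle})$, then invoke the trace-norm continuity of $(x,y)\mapsto\mathscr{R}_{\langle x,y\rangle}$ from Theorem~\ref{theo::schoenberg}. The only cosmetic difference is that the paper works with the trace directly rather than the trace norm of the difference, and your displayed inequality $\|2(\mathscr{R}_1 - \mathscr{R}_{\langle x,x_0\rangle})\|_1 \le 2\|\mathscr{R}_{\langle x_0,x_0\rangle} - \mathscr{R}_{\langle x,x_0\rangle}\|_1$ is in fact an equality (since $\mathscr{R}_{\langle x_0,x_0\rangle}=\mathscr{R}_1$), but neither point affects the argument.
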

%\begin{proposition}
%Mean square continuity if and only if $(x,y) \mapsto  \mathscr{R}_{\langle x,y\rangle}$ is continuous, i.e.,
%$$
%(x_n, y_n) \to (x_0,y_0) \implies \|\mathscr{R}_{\langle x_n,y_n\rangle} - \mathscr{R}_{\langle x_0, y_0\rangle}\|_1 \to 0. 
%$$
%\end{proposition}
This can be deduced from continuity by observing that
\begin{align*}
\Ex \|T(x) - T(x_0)\|^2_\hilbert 
%&= \mathbb{E}\|T(x) \|^2_\hilbert  + \mathbb{E}\|T(x_0) \|^2_\hilbert - 2\mathbb{E}\langle T(x), T(x_0)\rangle_\hilbert \\
&= 2 \|\mathscr{R}_1\|_1 - 2 \operatorname{Tr}(\mathscr{R}_{\langle x, x_0 \rangle}).
\end{align*}

\begin{proof}[Proof of Theorem \ref{theo::schoenberg}]

Consider the space of continuous functions from $\s \times \s$ to $\mathcal{S}_1$, written $C_b(\s \times \s; \mathcal{S}_1)$.
We can immediately see that, for any $L\in \mathbb{N}$,
$$
\sum_{\ell=0}^L \mathscr{F}_\ell \frac{2\ell+1}{4\pi} P_\ell(\langle x, y\rangle) \in C_b(\s \times \s; \mathcal{S}_1),
$$
by continuity of Legendre polynomials and $\langle \cdot, \cdot \rangle$. Moreover, for $L>L'$,
$$
\sup_{(x,y) \in \s \times \s }\left \| \sum_{\ell=L'+1}^L \mathscr{F}_\ell \frac{2\ell+1}{4\pi} P_\ell(\langle x, y\rangle) \right\|_1  \le \sum_{\ell=L'+1}^\infty \|\mathscr{F}_\ell\|_1 \frac{2\ell+1}{4\pi},
$$
and the sequence of partial sums is Cauchy; here we used the fact that Legendre polynomials are bounded by 1 and \eqref{sumCl}. Since the space of nuclear operators $\mathcal{S}_1$ equipped with the nuclear norm is a Banach space, then  $C_b(\s \times \s; \mathcal{S}_1)$ is also Banach under the uniform norm. As a consequence, this sum must converge to a continuous function.
We now prove that the limit is actually $\mathscr{R}_{\langle x, y \rangle}$. 

We will use the following characterization of the trace norm: if $\mathscr{T} \in \mathcal{S}_1$, then
$$\|\mathscr{T} \|_1 = \sup_{D \in \mathcal{D}}|\operatorname{Tr}(D\mathscr{T})|,$$
where $\mathcal{D}=\{D:\hilbert \to \hilbert \text{ compact}, \|D\|_\infty \le 1 \}.$

We first show that, for any $D \in \mathcal{D}$ and $f,g \in \hilbert$,
\begin{equation}\label{Cop-conv}
\left | \langle D\mathscr{R}_{\langle x,y \rangle} f, g \rangle_\hilbert  - \sum_{\ell=0}^L \langle D\mathscr{F}_\ell f, g \rangle_\hilbert   \frac{2\ell+1}{4\pi} P_\ell(\langle x, y\rangle) \right | \to 0,
\end{equation}
pointwise for any $x,y \in \s$.
Define the partial sum $$T_L(x) = \sum_{\ell=0}^L \sum_{m=-\ell}^{\ell}a_{\ell,m} Y_{\ell,m}(x),\qquad x \in \mathbb{S}^2,$$ with $a_{\ell,m}  = \int_{\mathbb{S}^2} T(x)Y_{\ell,m}(x) dx$.
By properties of Bochner integrals, we have that
\begin{align*}
&\langle D\mathscr{R}_{\langle x,y \rangle} f, g \rangle_\hilbert  - \sum_{\ell=0}^L \langle D\mathscr{F}_\ell f, g \rangle_\hilbert   \frac{2\ell+1}{4\pi} P_\ell(\langle x, y\rangle) \\
=&\, \mathbb{E}[ \langle DT(x), g \rangle_\hilbert  \langle  T(y), f \rangle_\hilbert]    -  \mathbb{E}[ \langle DT_L(x), g \rangle_\hilbert  \langle  T_L(y), f \rangle_\hilbert  .
\end{align*}
Now, by adding and subtracting the term
$
 \mathbb{E}[ \langle DT_L(x), g \rangle_\hilbert  \langle  T(y), f \rangle_\hilbert ],
$
we can observe that 
\begin{align*}
&\left | \mathbb{E}[ \langle DT(x), g \rangle_\hilbert  \langle  T(y), f \rangle_\hilbert -  \mathbb{E}[ \langle DT_L(x), g \rangle_\hilbert  \langle  T(y), f \rangle_\hilbert\right | \\
\le & \, \|f\|_\hilbert \|g\|_\hilbert \sqrt{\mathbb{E}\|T(y)\|_\hilbert^2}  \sqrt{\mathbb{E}\|T(x) - T_L(x)\|_\hilbert^2},
\end{align*}
while
\begin{align*}
&\left | \mathbb{E}[ \langle DT_L(x), g \rangle_\hilbert  \langle  T(y), f \rangle_\hilbert -  \mathbb{E}[ \langle DT_L(x), g \rangle_\hilbert  \langle  T_L(y), f \rangle_\hilbert\right | \\
\le & \, \|f\|_\hilbert \|g\|_\hilbert \sqrt{\mathbb{E}\|T_L(x)\|_\hilbert^2}  \sqrt{\mathbb{E}\|T(y) - T_L(y)\|_\hilbert^2}.
\end{align*}
From Theorem \ref{theo::spectral-sphere}, we have that 
\begin{equation*}
\lim_{L\to \infty} \mathbb{E}\|T(x) - T_L(x)\|^2_\hilbert=0,
\end{equation*}
and $\mathbb{E}\|T_L(x)\|_\hilbert^2 \le \mathbb{E}\|T(x)\|_\hilbert^2 < \infty$, for every $x \in \mathbb{S}^2$. Thus, \eqref{Cop-conv} is proved.

Lastly, we note that
\begin{align*}
&\left \| \mathscr{R}_{\langle x,y \rangle} - \sum_{\ell=0}^L \mathscr{F}_{\ell} \frac{2\ell+1}{4\pi} P_\ell(\langle x, y\rangle) \right \|_1  =  \sup_{D \in \mathcal{D}} \left | \operatorname{Tr} \left (D \mathscr{R}_{\langle x,y \rangle} - \sum_{\ell=0}^L D\mathscr{F}_{\ell} \frac{2\ell+1}{4\pi} P_\ell(\langle x, y\rangle) \right) \right|\\
=& \sup_{D \in \mathcal{D}} \left | \sum_{j=1}^\infty \left \langle \left (D\mathscr{R}_{\langle x,y \rangle} - \sum_{\ell=0}^L D\mathscr{F}_{\ell} \frac{2\ell+1}{4\pi} P_\ell(\langle x, y\rangle) \right) e_j, e_{j}\right \rangle_\hilbert \right |\\
\le& \sup_{D \in \mathcal{D}}  \sum_{j=1}^\infty \sum_{\ell=L+1}^\infty | \langle D \mathscr{F}_\ell e_j, e_{j} \rangle_\hilbert | \frac{2\ell+1}{4\pi}  \\
\le& \sup_{D \in \mathcal{D}}  \sum_{\ell=L+1}^\infty  \|D\mathscr{F}_\ell\|_1  \frac{2\ell+1}{4\pi} \\
\le&  \sum_{\ell=L+1}^\infty  \|\mathscr{F}_\ell\|_1  \frac{2\ell+1}{4\pi},
\end{align*}
thus
\begin{align*}
\sup_{(x,y) \in \s \times \s}\left \| \mathscr{R}_{\langle x,y \rangle} - \sum_{\ell=0}^L \mathscr{F}_{\ell} \frac{2\ell+1}{4\pi} P_\ell(\langle x, y\rangle) \right \|_1 \le  \sum_{\ell=L+1}^\infty  \|\mathscr{F}_\ell\|_1  \frac{2\ell+1}{4\pi} \to 0,
\end{align*}
and the proof is concluded.
\end{proof}

\subsection{The $L^2([0,1]; \mathbb{R})$ case}\label{sec::zerone}

As an example, we specialize the results to the space $L^2([0,1]; \mathbb{R})$, which can be of great interest for real applications. Indeed, in this case, for each point on the surface of the sphere we can think of observing some real-valued function defined on a given interval, such as the concentrations of particulate matter in the Earth's atmosphere or the air temperature and pressure at different altitudes. However, the interval $[0,1]$ can be even interpreted as a time interval (without the necessity of imposing temporal stationarity).
This framework can also be suited in Cosmology and Astrophysics for the study the Cosmic Microwave Background (CMB) from a functional data analysis perspective, e.g., by a model of the form
$$
T(x, u) = \mu(x) + Z(x, u),
$$
where $ x \in \mathbb{S}^2$ is a point of the sky, $u \in [0,1]$ represents a \emph{rescaled} frequency, $\mu(\cdot)$ is a temperature function and $Z$ is the contribution from foregrounds and instrumental noise to the map
$T$ (see, e.g., \cite{DRD:astro}). For instance, the \emph{Planck} (\url{https://www.esa.int/Planck}) instruments were able to measure sky radiation at different frequencies, specifically in the frequency range of 27 GHz to 1 THz. Such data can be used to extract the most accurate estimates of the spatial variations of the temperature of the CMB radiation, a key to understanding the origin of the Universe and the evolution of galaxies.
\\

Consider the collection of centered real-valued random variables $$\{T(x, u), \ x \in \mathbb{S}^2, u \in [0,1]\}$$ that is measurable with respect to the product $\sigma$-field $\mathfrak{B}(\mathbb{S}^2) \times \mathfrak{B}([0,1]) \times \sigmafield$. We assume that, for all $x \in \mathbb{S}^2$, $T(x, \cdot)$ belongs almost surely to $\hilbert= L^2([0,1]; \mathbb{R})$, and such collection of $L^2([0,1]; \mathbb{R})$-valued random variables satisfies Condition \ref{cond::1}.

Observe that, if $\mathbb{E} \| T(x,\cdot) \|_\hilbert^2  < \infty$, by Fubini's theorem $\mathbb{E} |T(x, u)|^2 < \infty$ $u$-almost everywhere. Then, the autocovariance kernel 
$
r_{\langle x,y \rangle}(\cdot, \cdot): (u, v) \mapsto \mathbb{E}\left [ T(x, u) T(y, v) \right]$
is in $L^2([0,1] \times [0,1]; \mathbb{R})$, and the corresponding operator $\mathcal{R}_{\langle x,y \rangle}: \hilbert \to \hilbert$ induced by right integration
$$(\mathcal{R}_{\langle x,y \rangle} f)(\cdot) = \int_0^1 r_{\langle x,y \rangle}(\cdot,v) f(v) d v$$ 
coincides with the autocovariance operator $\mathscr{R}_{\langle x,y \rangle}$, see also \cite{Panaretos2}. We shall use the notation $\mathscr{R}_{\langle x,y \rangle}$ for both.
%Under isotropy, clearly we can define $r_{\langle x, y\rangle}(\cdot,\cdot) \equiv r_{x,y}(\cdot,\cdot)$.
%Moreover, under isotropy (and joint measurability/Fubini's theorem), one has that 
%$$\mathbb{E} \left [ \int_{\mathbb{S}^2 \times [0,1]} |T(x, u)|^2 d x d u \right] = 4 \pi \mathbb{E} \|T(x_0, \cdot) \|_\hilbert^2 < \infty,$$ 
%for any fixed $x_0 \in \mathbb{S}^2$.
% There is a unique positive measure on $\s$ which is invariant by rotation, with total mass $4\pi$. 
%This implies that there exists a $\sigmafield$-measurable set $\Omega'$ of $\operatorname{P}$-probability 1 such that, for every $\omega \in \Omega'$,  $T(\cdot, \cdot, \omega)$ is an element of $L^2(\mathbb{S}^2 \times [0,1])$.
It is readily seen that
\begin{equation*}
(\mathscr{F}_\ell \, g)(\cdot) = \int_0^1 C_\ell (\cdot, v) g(v) dv, \qquad \forall g \in L^2([0,1]; \mathbb{R}),
\end{equation*}
with
\begin{equation*}
C_\ell (u, v) = \int_{\mathbb{S}^2 \times \mathbb{S}^2} r_{\langle x, y\rangle}(u,v) Y_{\ell,0}(x) Y_{\ell, 0}(y) dx dy,
\end{equation*}
defined as an element of $L^2([0,1]\times[0,1]; \mathbb{R})$.
Furthermore, it holds that
\begin{equation*}
\sup_{(x,y) \in \s \times \s} \int_{[0,1]\times[0,1]} \left  | r_{\langle x, y\rangle}(u,v) - \sum_{\ell=0}^L C_\ell(u,v) \frac{2\ell+1}{4\pi} P_\ell(\langle x, y \rangle)\right |^2 dudv \to 0, \qquad L\to \infty.
\end{equation*}

If in addition, for every $x \in \mathbb{S}^2$, $\{T(x,u), \ u \in [0,1]\}$ is mean-square continuous, or equivalently $r_1(\cdot, \cdot)$ is continuous on $[0,1] \times [0,1]$, we can rephrase everything in a pointwise sense.
Moreover, due to isotropy, this further assumption implies joint mean-square continuity, i.e.,
$$
\lim_{(x,u) \to(x_0,u_0)} \mathbb{E}|T(x,u)-T(x_0,u_0)|^2=0, \qquad \forall (x_0,u_0) \in \mathbb{S}^2 \times[0,1].
$$

\section{High-frequency asymptotics}\label{sec::high-freq}

In this section, we consider $T=\{T(x), x \in \s\}$ satisfying Condition \ref{cond::1} that is also $Gaussian$, i.e., a collection of jointly Gaussian $\hilbert$-valued random variables. This means that for any $n \in \mathbb{N}$, any $x_1,\dots,x_n \in \s$, any $f_1,\dots, f_n \in \hilbert$, 
$$
\sum_{i=1}^n \langle T(x_i), f_i \rangle_\hilbert
$$
is a real-valued Gaussian random variable. Then, the $a_{\ell, m}$'s are (independent) Gaussian $\hilbert$-valued random variables. The converse is also true due to Theorem \ref{theo::spectral-sphere}.

In general,
each $a_{\ell,m}$ can be decomposed into the sum of uncorrelated components by means of its principal component decomposition, i.e., if $\{e_{j;\ell}, j\in \mathbb{N}\}$ are the eigenvectors of $\mathscr{F}_\ell$ then
$$
a_{\ell,m} = \sum_{j=1}^\infty \langle a_{\ell,m}, e_{j;\ell} \rangle_\hilbert\,  e_{j;\ell},
$$
with probability one, where $\{ \langle a_{\ell,m},  e_{j;\ell} \rangle_\hilbert $, $j\in \mathbb{N}\}$ are uncorrelated random variables with mean zero and variances the corresponding eigenvalues of $\mathscr{F}_\ell$, say $\{\lambda_{j;\ell}, j\in \mathbb{N}\}$. As a consequence, if $a_{\ell,m}$ is a Gaussian $\hilbert$-valued random variable, then $\langle a_{\ell,m},  e_{j;\ell} \rangle_\hilbert \sim \mathcal{N}(0, \lambda_{j;\ell})$.

Without loss of generality, we assume that $\|\mathscr{F}_\ell\|_\infty \ne 0$, for any integer $\ell \ge 0$. So that the $a_{\ell, m}$'s are nontrivial random variables. We then define the sample power spectrum operator as
$$
\hat{\mathscr{F}}_\ell  = \frac{1}{2\ell+1} \sum_{m=-\ell}^\ell a_{\ell,m} \otimes a_{\ell,m}.
$$

\begin{remark}
In the $L^2$ case, the sample power spectrum operator takes the form
$$
\hat{\mathscr{F}}_\ell g  = \int_0^1 \hat{C_\ell} (\cdot, v) g(v) dv, \qquad \forall g \in L^2([0,1]; \mathbb{R}),
$$
with
$$
\hat{C_\ell} (u,v) = \frac{1}{2\ell+1} \sum_{m=-\ell}^\ell a_{\ell,m}(u) a_{\ell,m} (v),
$$
defined almost surely as an element of $L^2([0,1]\times[0,1]; \mathbb{R})$. 
\end{remark}

One can immediately observe that
\begin{align*}
\mathbb{E}\| \hat{\mathscr{F}}_\ell - \mathscr{F}_\ell \|_{1} &\le \mathbb{E}\| a_{\ell,m} \otimes a_{\ell,m}  \|_{1}  +\| \mathscr{F}_\ell \|_{1} \\
&= \mathbb{E} \|a_{\ell,m} \|_\hilbert^2 + \| \mathscr{F}_\ell \|_{1} \\
&= 2  \| \mathscr{F}_\ell \|_{1},
\end{align*}
implying that $\| \hat{\mathscr{F}}_\ell - \mathscr{F}_\ell \|_{p}  \to 0$ in probability, as $\ell \to \infty,$ for all $1 \le p \le \infty$.
Moreover, for any $\epsilon >0$,
\begin{align*}
\sum_{\ell=0}^\infty \operatorname{P} \left (\| \hat{\mathscr{F}}_\ell - \mathscr{F}_\ell \|_{1} >\epsilon \right)
&\le \frac2\epsilon\sum_{\ell=0}^\infty \|\mathscr{F}_\ell \|_1 < \infty,
\end{align*}
which proves (by Borell-Cantelli Lemma) almost sure convergence, again in any $p$-norm.

However, it becomes nontrivial to ask whether a proper \emph{standardization} will give as well convergence results in some $p$-norms. This will allow to obtain a more complete information on the asymptotic performance of $\hat{\mathscr{F}}_\ell$ (and hence of any statistical procedure based on $\hat{\mathscr{F}}_\ell$). In line with the real-valued case (see \cite{MP:ergodicity} and \cite[Section 8.2]{MP:11}), we will refer to the following result as \emph{high frequency ergodicity}.

\begin{lemma}\label{lemma::second-moment}
For all integers $\ell \ge 0$, it holds that
\begin{equation*}
\mathbb{E}\| (\|\mathscr{F}_\ell\|_2^2+\mathcal{C}_\ell^2)^{-1/2}(\hat{\mathscr{F}}_\ell - \mathscr{F}_\ell )\|^2_{2} = \frac{1}{2\ell+1}=o(1).
\end{equation*}
\end{lemma}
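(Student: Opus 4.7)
The plan is to expand the quantity being estimated, exploit independence of the Fourier coefficients across $m$, and then reduce the problem to a single Gaussian fourth moment calculation. Writing
$$\hat{\mathscr{F}}_\ell - \mathscr{F}_\ell = \frac{1}{2\ell+1}\sum_{m=-\ell}^{\ell}\bigl(a_{\ell,m}\otimes a_{\ell,m}-\mathscr{F}_\ell\bigr),$$
I would expand $\|\hat{\mathscr{F}}_\ell-\mathscr{F}_\ell\|_2^2$ as a double sum of $\langle\cdot,\cdot\rangle_2$ inner products indexed by $(m,m')$. Since $T$ is assumed Gaussian, the vectors $\{a_{\ell,m}\}_m$ are uncorrelated Gaussian $\hilbert$-valued random variables (by Theorem \ref{theo::spectral-sphere} together with the final observation preceding the remark) and hence independent. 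Therefore all off-diagonal terms vanish in expectation, leaving
$$\mathbb{E}\|\hat{\mathscr{F}}_\ell-\mathscr{F}_\ell\|_2^2 = \frac{1}{2\ell+1}\,\mathbb{E}\|a\otimes a-\mathscr{F}_\ell\|_2^2,$$
where $a$ denotes any $a_{\ell,m}$ (all $m$ give the same law).

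The next step is to expand the single-term second moment using the identity $\|S-\mathscr{F}_\ell\|_2^2=\|S\|_2^2-2\langle S,\mathscr{F}_\ell\rangle_2+\|\mathscr{F}_\ell\|_2^2$ with $S=a\otimes a$. Two easy computations apply here: $\|a\otimes a\|_2^2=\|a\|_\hilbert^4$ (from the rank-one structure), and $\langle a\otimes a,\mathscr{F}_\ell\rangle_2=\langle a,\mathscr{F}_\ell a\rangle_\hilbert$. Taking expectations, the cross term gives $\mathbb{E}\langle a,\mathscr{F}_\ell a\rangle_\hilbert=\operatorname{Tr}(\mathscr{F}_\ell^2)=\|\mathscr{F}_\ell\|_2^2$ via $\mathbb{E}[a\otimes a]=\mathscr{F}_\ell$.

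The main (though still routine) computation is $\mathbb{E}\|a\|_\hilbert^4$. Here I would use the Karhunen--Loève/PCA decomposition mentioned right before the lemma: $a=\sum_j\sqrt{\lambda_{j;\ell}}\,Z_j\,e_{j;\ell}$ with i.i.d.\ standard normals $Z_j$. Then $\|a\|_\hilbert^2=\sum_j\lambda_{j;\ell}Z_j^2$, and an elementary calculation (using $\mathbb{E} Z_j^4=3$ and independence) gives
$$\mathbb{E}\|a\|_\hilbert^4 = 2\sum_j\lambda_{j;\ell}^2+\Bigl(\sum_j\lambda_{j;\ell}\Bigr)^{\!2}=2\|\mathscr{F}_\ell\|_2^2+\mathcal{C}_\ell^2.$$
Substituting back yields $\mathbb{E}\|a\otimes a-\mathscr{F}_\ell\|_2^2=\|\mathscr{F}_\ell\|_2^2+\mathcal{C}_\ell^2$, which after dividing by $2\ell+1$ is precisely the claim.

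I expect no significant obstacle: the only place where care is needed is justifying the switch from "uncorrelated" to "independent" for the $a_{\ell,m}$'s, which uses Gaussianity together with $\mathbb{E}[a_{\ell,m}\otimes a_{\ell,m'}]=0$ for $m\ne m'$ from Theorem \ref{theo::spectral-sphere}; and the fourth-moment computation, which is cleanest through the eigen-expansion. The bound $\mathcal{C}_\ell=\|\mathscr{F}_\ell\|_1<\infty$ (Proposition \ref{prop::ps}) ensures all quantities are finite, so convergence of the series $\sum_j\lambda_{j;\ell}^2\le(\sum_j\lambda_{j;\ell})^2<\infty$ and Fubini-type interchanges are harmless.
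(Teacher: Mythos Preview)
Your proposal is correct and follows essentially the same approach as the paper: reduce to a single term via independence of the $a_{\ell,m}$'s, expand $\mathbb{E}\|a\otimes a-\mathscr{F}_\ell\|_2^2$ in terms of $\mathbb{E}\|a\|_\hilbert^4$ and $\|\mathscr{F}_\ell\|_2^2$, and compute the fourth moment through the eigen-expansion. The paper's write-up is slightly terser (it collapses the cross-term computation into one line), but the logic is identical.
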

The normalization term in Lemma \ref{lemma::second-moment} differs from the real-valued case. Indeed, the real-valued case can be seen as a particular case of the Hilbert-valued one, with $\|\mathscr{F}_\ell\|_2 = \mathcal{C}_\ell$.
\begin{proof}[Proof of Lemma \ref{lemma::second-moment}]
The rate can be obtained by first observing that
\begin{align*}
\mathbb{E}\|\hat{\mathscr{F}}_\ell - \mathscr{F}_\ell \|^2_{2} &= \mathbb{E}\left \| \frac{1}{2\ell+1} \sum_{m=-\ell}^\ell \left(a_{\ell, m} \otimes a_{\ell, m}  - \mathscr{F}_\ell \right) \right \|^2_{2}\\
&= \frac{1}{2\ell+1} \mathbb{E}\left \| a_{\ell, m} \otimes a_{\ell, m}  - \mathscr{F}_\ell  \right \|^2_{2}\\
&= \frac{1}{2\ell+1} \left ( \mathbb{E}\left \|a_{\ell, m}  \otimes a_{\ell, m}  \right \|^2_{2}  -  \|  \mathscr{F}_\ell  \|^2_{2} \right  )\\
&= \frac{1}{2\ell+1} \left ( \mathbb{E}\left \|a_{\ell, m}  \right \|^4_\hilbert  -  \|  \mathscr{F}_\ell  \|^2_{2}\right  ).
\end{align*}
Moreover, the fourth moment is given by
\begin{align*}
 \mathbb{E}\left \| a_{\ell, m} \right \|^4_\hilbert &=  \sum_{j=1}^\infty \sum_{j'=1}^\infty  \mathbb{E} \left[\langle a_{\ell,m},  e_{j;\ell} \rangle^2_\hilbert  \langle a_{\ell,m},  e_{j';\ell}\rangle^2_\hilbert  \right]\\
 &= \sum_{j} \mathbb{E}\langle a_{\ell,m},  e_{j;\ell} \rangle^4_\hilbert   +  \sum_{j \ne j'} \mathbb{E}\langle a_{\ell,m},  e_{j;\ell} \rangle^2_\hilbert \,\mathbb{E}\langle a_{\ell,m},  e_{j';\ell} \rangle^2_\hilbert   \\
 &= 3\sum_j \lambda^2_{j;\ell} + \left(\sum_{j} \lambda_{j;\ell} \right)^2  -  \sum_j \lambda^2_{j;\ell} \\
& = 2\left  \|  \mathscr{F}_\ell  \right \|^2_{2} + \mathcal{C}_\ell^2.
\end{align*}
In the end, we get
\begin{equation*}
\mathbb{E}\| \hat{\mathscr{F}}_\ell - \mathscr{F}_\ell \|^2_{2} =  \frac{\|\mathscr{F}_\ell \|^2_{2} + \mathcal{C}_\ell^2 }{2\ell+1},
\end{equation*}
which gives the final result.
\end{proof}

\subsection{Quantitative functional CLT for the power spectrum operators}

In the following we will present our main result, which consists in a functional \emph{quantitative} central limit theorem for the sample power spectrum operators, i.e., an upper bound for the $d_2$-distance between 
$$
\sqrt{\frac{2\ell+1}{\| \mathscr{F}_\ell\|^2_{2} + \mathcal{C}_\ell^2 }} (\hat{\mathscr{F}}_\ell - \mathscr{F}_\ell)
$$
and a target Gaussian $\hs^\dag$-valued random variable, at any given integer $\ell \ge 0$.

\begin{theorem}\label{theo::fqclt}
Assume that $\mathscr{F}_\ell$ is positive definite, for all integers $\ell \ge 0$, and write
$$
F_\ell =  \sqrt{\frac{2\ell+1}{\| \mathscr{F}_\ell\|^2_{2} + \mathcal{C}_\ell^2 }} (\hat{\mathscr{F}}_\ell - \mathscr{F}_\ell) \in \hs^\dag.
$$
Then, it holds that
\begin{align}\label{eq::maind2}
d_2( F_\ell, Z_\ell) &\le \frac{1+\sqrt{3}}{2\sqrt{3}}\sqrt{ \left ( \frac{12}{2\ell+1} + 3 \right)  \frac{12}{2\ell+1}},
\end{align}
where $Z_\ell$ are centered Gaussian $\hs^\dag$-valued random variables with the same covariance operators as the $F_\ell$, that is, $(\|\mathscr{F}_\ell\|^2_{2} + \mathcal{C}_\ell^2  )^{-1}\mathscr{S}_\ell$.
\end{theorem}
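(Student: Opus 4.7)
My plan is to view $F_\ell$ as a second Wiener chaos element taking values in the Hilbert space $\hs^\dag$ and to apply a quantitative Stein--Malliavin bound specifically tailored to Hilbert-valued second chaos random variables, such as the one developed in \cite{campese}. The specific form of the bound in \eqref{eq::maind2}, and in particular the universal constant $(1+\sqrt{3})/(2\sqrt{3})$, strongly suggests invoking that machinery directly.

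First, I would confirm that $F_\ell$ indeed belongs to the second Wiener chaos of the isonormal Gaussian process generated by the family $\{a_{\ell,m}\}_{m=-\ell}^{\ell}$. Using the Karhunen--Lo\`eve representation $a_{\ell,m}=\sum_j \sqrt{\lambda_{j;\ell}}\,\xi_{j,m}\,e_{j;\ell}$, where $\{\xi_{j,m}\}$ are i.i.d.\ standard real Gaussian variables, each summand $a_{\ell,m}\otimes a_{\ell,m}-\mathscr{F}_\ell$ expands as
\begin{equation*}
\sum_{j,k} \sqrt{\lambda_{j;\ell}\lambda_{k;\ell}}\,(\xi_{j,m}\xi_{k,m}-\delta_{j,k})\,e_{j;\ell}\otimes e_{k;\ell},
\end{equation*}
which is manifestly in $\mathcal{H}_2\otimes \hs^\dag$. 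Since $F_\ell$ is a normalized sum of $2\ell+1$ such independent terms, it is again second chaos, and the independence across $m$ will be crucial for the variance computations that follow.

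Second, I would invoke the abstract Stein--Malliavin bound for second chaos Hilbert-valued random elements: this bounds $d_2(F_\ell,Z_\ell)$ by $(1+\sqrt{3})/(2\sqrt{3})$ times the square root of a product of two Malliavin/trace quantities. One factor is a trace-type term of the form $\operatorname{Tr}(S_\ell)$ plus a correction, normalized so that $\mathbb{E}\|F_\ell\|_2^2 = 1$; the other is a ``contraction'' or fourth-cumulant type term $\mathbb{E}\|F_\ell\otimes_2 F_\ell - S_\ell\|^2$ (or, equivalently, a $\Gamma$-calculus variance) measuring the departure from Gaussianity. The task then reduces to evaluating these two quantities explicitly.

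Third, I would compute both quantities using Gaussian moment identities and the independence of the $a_{\ell,m}$ across $m$. Since the normalization makes $F_\ell$ a centered sum of $2\ell+1$ i.i.d.\ pieces divided by $\sqrt{2\ell+1}$, any fourth-cumulant/contraction expression reduces to a single-$m$ computation divided by $2\ell+1$; standard isoserric Gaussian calculus (using $\mathbb{E}[\xi^4]=3$ and $\mathbb{E}[\xi^2\eta^2]=1$ for independent standard normals) should then identify the constants $12/(2\ell+1)$ and $3$ in \eqref{eq::maind2}. The bookkeeping in the 8-fold sum over eigenindices $j,k,j',k'$ — which arises when computing $\mathbb{E}\|F_\ell\otimes_2 F_\ell\|^2$ and matching it to $\|S_\ell\|_{\hs(\hs^\dag)}^2$ — is where I expect the main obstacle to lie: one must carefully pair indices using Wick's formula and verify that all cross-terms collapse into the explicit constants $12$ and $3$ via the nuclear/Hilbert--Schmidt relations $\sum_j\lambda_{j;\ell}=\mathcal{C}_\ell$ and $\sum_j\lambda_{j;\ell}^2=\|\mathscr{F}_\ell\|_2^2$, which are precisely the quantities absorbed in the normalization.
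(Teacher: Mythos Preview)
Your proposal is correct and follows essentially the same route as the paper: identify $F_\ell$ as an element of the second Wiener chaos in $L^2(\Omega;\hs^\dag)$ via the Karhunen--Lo\`eve expansion, then apply \cite[Theorem~3.10(ii)]{campese}, which for a normalized second-chaos element bounds $d_2(F_\ell,Z_\ell)$ by $\frac{1+\sqrt{3}}{2\sqrt{3}}\sqrt{\mathbb{E}\|F_\ell\|_2^4\cdot(\mathbb{E}\|F_\ell\|_2^4 - 1 - 2\|(\|\mathscr{F}_\ell\|_2^2+\mathcal{C}_\ell^2)^{-1}\mathscr{S}_\ell\|_2^2)}$. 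One small correction to your bookkeeping expectation: the exact fourth-moment computation also produces the Schatten-4 norm $\|\mathscr{F}_\ell\|_4^4=\sum_j\lambda_{j;\ell}^4$ (not just $\mathcal{C}_\ell$ and $\|\mathscr{F}_\ell\|_2^2$), and the clean constants $12$ and $3$ in \eqref{eq::maind2} arise only after bounding the ratios $\|\mathscr{F}_\ell\|_4^4/(\|\mathscr{F}_\ell\|_2^2+\mathcal{C}_\ell^2)^2$ and $\|\mathscr{F}_\ell\|_2^4/(\|\mathscr{F}_\ell\|_2^2+\mathcal{C}_\ell^2)^2$ by $1$.
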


\begin{remark}
The terms $12/(2\ell+1) + 3$ and $12/(2\ell +1)$ appearing in Equation \eqref{eq::maind2} are precisely the fourth moment and cumulant that one can obtain in the real-valued case (see \cite[Section 8 -- page 199]{MP:11}). However, in the infinite dimensional setting, they represent a (tight) upper bound and the exact expressions for the fourth moment and cumulant are reported in Equation \eqref{eq::d2}.
\end{remark}

\begin{remark}
It is important to note that this result is non-asymptotic, which means that it holds for every finite integer $\ell \ge 0$. A careful inspection of the proof then reveals that a quantitative functional central limit theorem can also be derived for the sample covariance operator $\hat{\mathscr{R}}_n$ based on $X_1,\dots,X_n$ (zero mean) independent and identically distributed Gaussian $\hilbert$-valued random variables. It is already established that, if $\mathbb{E}\|X_1\|^4_\hilbert< \infty$ and $ \mathscr{R}= \mathbb{E}[X_1 \otimes X_1]$, $\sqrt{n}(\hat{\mathscr{R}}_n - \mathscr{R})$ converges in distribution in $\hs$ (and hence also in $\hs^\dag$) to some well defined Gaussian random variable (see \cite[Theorem 8.1.2]{Hsing}). Then, if we write
$$
F_n = \sqrt{\frac{n}{\| \mathscr{R}\|^2_{2} + \| \mathscr{R}\|^2_1}}(\hat{\mathscr{R}}_n - \mathscr{R}),
$$
we can get the same type of bound for $d_2(F_n, Z)$, where $Z$ is the limiting Gaussian $\hs^\dag$-valued random variable divided by $\sqrt{\| \mathscr{R}\|^2_{2} + \| \mathscr{R}\|^2_1}$.
However, in the high-frequency ergodicity setting we discussed, this result assumes much greater relevance. Indeed, even if it is not guaranteed that the limiting distribution is well defined (it depends on the behavior of $(\|\mathscr{F}_\ell\|^2_{2} + \mathcal{C}_\ell^2  )^{-1}\mathscr{S}_\ell$ as $\ell \to \infty$), we still have that for finite large enough $\ell$ the distribution is approximately Gaussian. 
\end{remark}

%\begin{remark}
%It is important to note that this result is non-asymptotic, which means that it holds for every finite $\ell \in \mathbb{N}$. Indeed, even if it is not guaranteed that the limiting distribution is well defined (it depends on the behavior of $(\|\mathscr{F}_\ell\|^2_{2} + \mathcal{C}_\ell^2  )^{-1}\mathscr{S}_\ell$ as $\ell \to \infty$), the result tells us that for finite large enough $\ell$ the distribution is approximately Gaussian. 
%\end{remark}

To prove the main theorem, we need first the following lemma which gives a characterization of the covariance operator of $\sqrt{2\ell+1}(\hat{\mathscr{F}}_\ell - \mathscr{F}_\ell)$.

\begin{lemma}\label{lemma::sl}
For any fixed integer $\ell \ge 0$, the covariance operator of $\sqrt{2\ell+1}(\hat{\mathscr{F}}_\ell - \mathscr{F}_\ell)$ on $\hs^\dag$ is given by
$$
\mathscr{S}_\ell  =2 \sum_{\substack{j,j'=1 \\ j \ge j'}}^\infty \lambda_{j;\ell} \lambda_{j';\ell}E_{j,j';\ell}  \otimes_2 E_{j,j';\ell},$$
where
\begin{equation*}\label{eq::Econs}
E_{j,j';\ell} = \begin{cases}
e_{j;\ell} \otimes  e_{j;\ell} & j=j'\\
\frac{e_{j;\ell} \otimes  e_{j';\ell}}{\sqrt{2}}  + \frac{e_{j';\ell} \otimes  e_{j;\ell}}{\sqrt{2}} &j \ne j'
\end{cases}.
\end{equation*}
If in addition $\mathscr{F}_\ell$ is positive definite, then also $\mathscr{S}_\ell$ is.
\end{lemma}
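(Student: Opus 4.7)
The plan is to exploit the iid structure of $\{a_{\ell,m}\}_m$, expand a single recentered outer product in the Karhunen--Lo\`eve basis of $\mathscr{F}_\ell$, and then read off the covariance operator using Gaussian moment formulas against the proposed CONS $\{E_{j,j';\ell} : j \ge j'\}$ of $\hs^\dag$. First I would write
$$
\sqrt{2\ell+1}\bigl(\hat{\mathscr{F}}_\ell - \mathscr{F}_\ell\bigr) = \frac{1}{\sqrt{2\ell+1}}\sum_{m=-\ell}^{\ell}\bigl(a_{\ell,m} \otimes a_{\ell,m} - \mathscr{F}_\ell\bigr);
$$
since the summands are iid and centered, the $2\ell+1$ factors cancel and the covariance operator on $\hs^\dag$ coincides with that of the single random element $X := a_{\ell,m}\otimes a_{\ell,m} - \mathscr{F}_\ell$, which lives in $\hs^\dag$ by self-adjointness of $a_{\ell,m}\otimes a_{\ell,m}$.

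Next, setting $\xi_j := \langle a_{\ell,m}, e_{j;\ell}\rangle_\hilbert \sim \mathcal{N}(0,\lambda_{j;\ell})$, these scalars are mutually independent by Gaussianity and diagonality of $\mathscr{F}_\ell$, and the expansion $\mathscr{F}_\ell = \sum_j \lambda_{j;\ell}\, e_{j;\ell}\otimes e_{j;\ell}$ yields
$$
X = \sum_j (\xi_j^2 - \lambda_{j;\ell})\, e_{j;\ell}\otimes e_{j;\ell} + \sum_{j\ne j'} \xi_j \xi_{j'}\, e_{j;\ell}\otimes e_{j';\ell}.
$$
I would then verify that $\{E_{j,j';\ell} : j \ge j'\}$ is a CONS of $\hs^\dag$: orthonormality is immediate from $\langle u\otimes v, u'\otimes v'\rangle_2 = \langle u,u'\rangle_\hilbert\langle v,v'\rangle_\hilbert$, and completeness follows because every self-adjoint Hilbert--Schmidt operator $T$ expands as $T = \sum_j \langle T e_{j;\ell}, e_{j;\ell}\rangle_\hilbert\, E_{j,j;\ell} + \sum_{j > j'} \sqrt{2}\,\langle T e_{j';\ell}, e_{j;\ell}\rangle_\hilbert\, E_{j,j';\ell}$. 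Symmetrizing the off-diagonal part of $X$ via $e_j\otimes e_{j'} + e_{j'}\otimes e_j = \sqrt{2}\, E_{j,j';\ell}$ rewrites $X$ as $\sum_j (\xi_j^2 - \lambda_{j;\ell})\, E_{j,j;\ell} + \sum_{j > j'} \sqrt{2}\,\xi_j\xi_{j'}\, E_{j,j';\ell}$.

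Finally I would compute $\mathscr{S}_\ell = \mathbb{E}[X\otimes_2 X]$ by showing that the coefficients of $X$ in this CONS are pairwise uncorrelated. Independence of the $\xi_j$'s together with vanishing of odd Gaussian moments kills every cross expectation between distinct basis coefficients: the cross term $\mathbb{E}[(\xi_j^2 - \lambda_{j;\ell})\xi_k\xi_{k'}]$ for $k > k'$ always leaves an isolated odd power, and $\mathbb{E}[\xi_j\xi_{j'}\xi_k\xi_{k'}]$ with $(j,j')\ne (k,k')$ and $j>j'$, $k>k'$ likewise vanishes. The diagonal variances are $\mathbb{E}[(\xi_j^2 - \lambda_{j;\ell})^2] = 2\lambda_{j;\ell}^2$ (using $\mathbb{E}\xi_j^4 = 3\lambda_{j;\ell}^2$) and $\mathbb{E}[2\xi_j^2\xi_{j'}^2] = 2\lambda_{j;\ell}\lambda_{j';\ell}$, which collapse into the unified expression $2\lambda_{j;\ell}\lambda_{j';\ell}$ for all $j\ge j'$, giving the claimed formula for $\mathscr{S}_\ell$ (series convergence in the trace norm on $\hs^\dag$ is automatic from $\sum_{j\ge j'}\lambda_{j;\ell}\lambda_{j';\ell} \le \mathcal{C}_\ell^2 < \infty$).

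For the positive-definiteness assertion, if $\mathscr{F}_\ell$ is positive definite then every $\lambda_{j;\ell}>0$, so $\mathscr{S}_\ell$ admits a diagonal spectral expansion over the CONS $\{E_{j,j';\ell}: j\ge j'\}$ with strictly positive eigenvalues $2\lambda_{j;\ell}\lambda_{j';\ell}$, hence is positive definite. The one delicate bookkeeping step is the $\sqrt{2}$-factor tracking in the symmetrization: it is precisely this factor that makes the diagonal-diagonal and strictly off-diagonal variances fuse into the single formula $2\lambda_{j;\ell}\lambda_{j';\ell}$; the rest of the argument is routine Gaussian moment calculus.
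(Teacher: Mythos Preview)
Your proof is correct and follows essentially the same approach as the paper: reduce to the covariance of a single summand $a_{\ell,m}\otimes a_{\ell,m}-\mathscr{F}_\ell$ via the i.i.d.\ structure, expand in the Karhunen--Lo\`eve basis to obtain coefficients $\tilde a_{\ell,m}(j,j')$ against the orthonormal system $\{E_{j,j';\ell}\}$, verify these coefficients are uncorrelated with variance $2\lambda_{j;\ell}\lambda_{j';\ell}$ by Gaussian moment identities, and deduce positive definiteness of $\mathscr{S}_\ell$ from completeness of $\{E_{j,j';\ell}\}$ in $\hs^\dag$ together with strict positivity of the eigenvalues. The only organisational difference is that you establish completeness of $\{E_{j,j';\ell}\}$ up front, whereas the paper records orthonormality first and invokes completeness only at the end for the positive-definiteness claim; substantively the arguments coincide.
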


\begin{proof}[Proof of Lemma \ref{lemma::sl}]
We define the covariance operator $\mathscr{S}_\ell: \hs^\dag \to \hs^\dag$ as 
$$\mathscr{S}_\ell = (2\ell+1)\mathbb{E}[(\hat{\mathscr{F}}_\ell - \mathscr{F}_\ell) \otimes_2 (\hat{\mathscr{F}}_\ell - \mathscr{F}_\ell)] = \mathbb{E}[(a_{\ell,0} \otimes a_{\ell,0} - \mathscr{F}_\ell ) \otimes_2 (a_{\ell,0} \otimes a_{\ell,0} - \mathscr{F}_\ell) ].$$

Now, for $j,j'\in \mathbb{N}, \ j \ge j'$, define
$$
E_{j,j';\ell} = \begin{cases}
e_{j;\ell} \otimes  e_{j;\ell} & j=j'\\
\frac{e_{j;\ell} \otimes  e_{j';\ell}}{\sqrt{2}}  + \frac{e_{j';\ell} \otimes  e_{j;\ell}}{\sqrt{2}} &j \ne j'
\end{cases}.
$$
It is easy to check that they are orthonormal, i.e.,
$$
\langle E_{j,j';\ell} , E_{k,k';\ell}  \rangle_2 = \begin{cases}
1 &\text{when } j=k \text{ and } j'=k' \\
0 &\text{otherwise}
\end{cases}.
$$
Since it holds that
$$
a_{\ell,m} = \sum_{j=1}^\infty \langle a_{\ell,m}, e_{j;\ell} \rangle_\hilbert\,  e_{j;\ell},
$$
with probability one in $\hilbert$, then we can write
\begin{align}\label{eq::tensor}
a_{\ell,m} \otimes a_{\ell,m} - \mathscr{F}_\ell &=  \sum_{j=1}^\infty \sum_{j'=1}^\infty \left (\langle a_{\ell,m},  e_{j;\ell} \rangle_\hilbert  \langle a_{\ell,m},  e_{j';\ell} \rangle_\hilbert  -\lambda_{j;\ell} \delta_j^{j'}\right)  e_{j;\ell} \otimes  e_{j';\ell}\notag\\
&= \sum_{\substack{j,j'=1 \\ j \ge j'}}^\infty \tilde{a}_{\ell,m}(j,j') E_{j,j';\ell}, 
\end{align}
where
$$
\tilde{a}_{\ell,m}(j,j') = \begin{cases}
 \langle a_{\ell,m}, e_{j} \rangle_\hilbert^2  - \lambda_{j;\ell}&\text{when } j=j'\\
\sqrt{2}\langle a_{\ell,m}, e_{j} \rangle_\hilbert\langle a_{\ell,m}, e_{j'} \rangle_\hilbert &\text{otherwise }
\end{cases}.
$$
These coefficients are uncorrelated with zero-mean and $\mathbb{E}|\tilde{a}_{\ell,m}(j,j')|^2=2\lambda_{j;\ell} \lambda_{j';\ell}$. Then, we can derive
$$
\mathscr{S}_\ell  = 2 \sum_{\substack{j,j'=1 \\ j \ge j'}}^\infty \lambda_{j;\ell} \lambda_{j';\ell}E_{j,j';\ell}  \otimes_2 E_{j,j';\ell} .
$$

In addition, if $\mathscr{F}_\ell$ is positive definite, the set $\{E_{j,j';\ell} =,\ j,j'\in \mathbb{N}, \ j \ge j'\}$ is complete in $\hs^\dag$. Indeed, if $B \in \hs^\dag$ is orthogonal to $E_{j,j';\ell}$, for all $j,j'\in \mathbb{N}, \ j \ge j'$, then we have that
$$
0=\langle B, E_{j,j';\ell} \rangle_2  = \begin{cases} \langle Be_{j;\ell}, e_{j;\ell} \rangle_\hilbert &j=j' \\ \sqrt{2} \langle Be_{j;\ell}, e_{j';\ell} \rangle_\hilbert &j\ne j'
\end{cases},
$$
which implies that $B$ is the zero operator, since $\{e_{j;\ell}, \ j \in \mathbb{N}\}$ is a CONS for $\hilbert$.
As a consequence, if $\mathscr{F}_\ell$ is positive definite, then also $\mathscr{S}_\ell$ is. %Indeed, for any $\xi \in \hs^\dag$,
%\begin{align*}
 %\langle \mathscr{S}_\ell \xi, \xi \rangle_{2} &= \mathbb{E} \langle A_{\ell,0}, \xi \rangle^2_{2}=\mathbb{E} \left | \sum_{j, j'} \left ( \langle a_{\ell,0},  e_{j;\ell} \rangle_\hilbert  \langle a_{\ell,0},  e_{j';\ell} \rangle_\hilbert  - \lambda_{j;\ell}\, \delta_j^{j'} \right) \langle \xi,  e_{j;\ell} \otimes  e_{j';\ell} \rangle_{2} \right|^2\\
%&= 2 \sum_{\substack{j,j' \\ j\ge j'}} \lambda_{j;\ell} \lambda_{j';\ell} \left \langle \xi,  \frac{e_{j;\ell} \otimes  e_{j';\ell}}{2}  + \frac{e_{j';\ell} \otimes  e_{j;\ell}}{2} \right \rangle_{2}^2 \ge 0,
%\end{align*} 
%and it is is zero if and only if $\xi$ is the zero element of $\hs^\dag$.

\end{proof}

\begin{proof}[Proof of Theorem \ref{theo::fqclt}]
Recall that the space $\randomel$ is isomorphic to $\randomvar \otimes \hilbert$.
Now, since $a_{\ell,m}$ is a centered Gaussian $\hilbert$-valued random variable and belongs to $\randomel$, we have that
$$
a_{\ell,m} = \sum_{j=1}^\infty \langle a_{\ell,m},  e_{j;\ell} \rangle_\hilbert   e_{j;\ell} \in  \mathcal{H}_1(\hilbert),
$$
i.e., it is an element of the first-order Wiener chaos of $L^2(\Omega;\hilbert)$. Note that $\langle a_{\ell,m},  e_{j;\ell} \rangle_\hilbert = H_1(\langle a_{\ell,m},  e_{j;\ell} \rangle_\hilbert )$. 
In addition, $A_{\ell, m}= (a_{\ell,m} \otimes a_{\ell, m} - \mathscr{F}_\ell )$ belongs to $L^2(\Omega; \hs^\dag)$ and its expansion is given in \eqref{eq::tensor}, with 
$$
\tilde{a}_{\ell,m}(j,j') = \begin{cases}
 \lambda_{j;\ell} H_2\left( \frac{\langle a_{\ell,m}, e_{j} \rangle_\hilbert}{\sqrt{\lambda_{j;\ell}}} \right)&\text{when } j=j'\\
\sqrt{2}H_1(\langle a_{\ell,m}, e_{j} \rangle_\hilbert)H_1(\langle a_{\ell,m}, e_{j'} \rangle_\hilbert) &\text{otherwise }
\end{cases}.
$$
This implies that
$
A_{\ell, m} \in \mathcal{H}_2(\hs^\dag),
$
i.e., it is an element of the second-order Wiener chaos of $L^2(\Omega; \hs^\dag)$, as well as $\hat{\mathscr{F}}_\ell - \mathscr{F}_\ell$.

Then, Lemma \ref{lemma::second-moment} gives
$$
\mathbb{E}\| \sqrt{2\ell+1} (\hat{\mathscr{F}}_\ell - \mathscr{F}_\ell) \|^2_{2} = \| \mathscr{F}_\ell\|^2_{2} + \mathcal{C}_\ell^2.
$$
Moreover, we can compute the following fourth moment
\begin{align*}
&\mathbb{E}\| \sqrt{2\ell+1} (\hat{\mathscr{F}}_\ell - \mathscr{F}_\ell) \|^4_{2} = \frac{1}{(2\ell+1)^2} \sum_{m_1,m_2,m_3,m_4} \mathbb{E} [ \langle A_{\ell,m_1},  A_{\ell,m_2} \rangle_{2} \langle A_{\ell,m_3},  A_{\ell,m_4} \rangle_{2}]\\
%=&\frac{1}{(2\ell+1)^2} \sum_{m_1,m_2,m_3,m_4} \sum_{j_1,j_2,j_3,j_4} \mathbb{E} [\tilde{a}_{\ell,m_1}(j_1) \tilde{a}_{\ell,m_1}(j_2) \tilde{a}_{\ell,m_2}(j_1) \tilde{a}_{\ell,m_2}(j_2) \tilde{a}_{\ell,m_3}(j_3) \tilde{a}_{\ell,m_3}(j_4) \tilde{a}_{\ell,m_4}(j_3) \tilde{a}_{\ell,m_4}(j_4)]
%=&\frac{1}{(2\ell+1)^2} \sum_{m_1,m_2,m_3,m_4} \sum_{j_1,j_2,j_3,j_4} \mathbb{E} [\tilde{a}_{\ell,m_1} (j_1,j_2)\tilde{a}_{\ell,m_2} (j_1,j_2)  \tilde{a}_{\ell,m_3} (j_3,j_4) \tilde{a}_{\ell,m_4} (j_3,j_4) ]\\
=&\frac{1}{(2\ell+1)^2} \sum_{m} \mathbb{E}  \| A_{\ell,m} \|^4_{2} + \frac{1}{(2\ell+1)^2} \sum_{m \ne m} \mathbb{E}  \| A_{\ell,m} \|^2_{2}\, \mathbb{E}  \| A_{\ell,m'} \|^2_{2}  + \frac{2}{(2\ell+1)^2} \sum_{m\ne m'} \mathbb{E}  \langle A_{\ell,m} , A_{\ell,m'} \rangle_2^2 \\
=&\frac{1}{(2\ell+1)^2} \sum_{m} \operatorname{Var} \|A_{\ell,m}\|^2_2 + \frac{1}{(2\ell+1)^2}  \left (\sum_{m} \mathbb{E}  \| A_{\ell,m} \|^2_{2} \right)^2  + \frac{2}{(2\ell+1)^2} \sum_{m\ne m'} \mathbb{E}  \langle A_{\ell,m} , A_{\ell,m'} \rangle_2^2 \\
=& \frac{1}{2\ell+1} \left( 40 \|\mathscr{F}_\ell\|_4^4 + 16 \| \mathscr{F}_\ell \|^4_{2}  \right) + \left (  \| \mathscr{F}_\ell\|^2_{2} + \mathcal{C}_\ell^2 \right)^2 + 4 \frac{2\ell}{2\ell+1}  \left (  \| \mathscr{F}_\ell\|^4_{4} +  \| \mathscr{F}_\ell\|^4_{2} \right),
\end{align*}
where in particular for the variance term we have used the fact that
\begin{align*}
\operatorname{Var} \|A_{\ell,m}\|^2_2 &= \sum_{\substack{j,j'=1 \\ j \ge j'}}^\infty \operatorname{Var} |\tilde{a}_{\ell,m}(j,j')|^2\\
&= \sum_{\substack{j,j'=1 \\ j \ge j'}}^\infty \mathbb{E} |\tilde{a}_{\ell,m}(j,j')|^4 -  \sum_{\substack{j,j'=1 \\ j \ge j'}}^\infty \left ( \mathbb{E} |\tilde{a}_{\ell,m}(j,j')|^2 \right)^2 \\
&= 60 \sum_{j=1}^\infty  \lambda_{j;\ell}^4 + 36\sum_{\substack{j,j'=1 \\ j > j'}}^\infty \lambda_{j;\ell}^2 \lambda_{j';\ell}^2 - 4 \sum_{j=1}^\infty  \lambda_{j;\ell}^4 - 4\sum_{\substack{j,j'=1 \\ j > j'}}^\infty \lambda_{j;\ell}^2 \lambda_{j';\ell}^2 \\
&= 40 \|\mathscr{F}_\ell\|^4_4 + 16 \|\mathscr{F}_\ell\|^4_2,
\end{align*}
see also Proposition 2.7.13, \cite{noupebook}. Hence,
\begin{align*}
&\mathbb{E}\| \sqrt{2\ell+1} (\hat{\mathscr{F}}_\ell - \mathscr{F}_\ell) \|^4_{2} = \frac{12}{2\ell+1} \left (  3 \|\mathscr{F}_\ell\|_4^4 +  \| \mathscr{F}_\ell \|^4_{2}  \right) + \left (  \| \mathscr{F}_\ell\|^2_{2} + \mathcal{C}_\ell^2 \right)^2 + 4  \left (  \| \mathscr{F}_\ell\|^4_{4} +  \| \mathscr{F}_\ell\|^4_{2} \right).
\end{align*}
Note that $2\left (\| \mathscr{F}_\ell\|^4_{4} +  \| \mathscr{F}_\ell\|^4_{2} \right)$ is the squared Hilbert-Schmidt norm of $\mathscr{S}_\ell$, while  $\| \mathscr{F}_\ell\|^2_{2} + \mathcal{C}_\ell^2$ is the trace norm of $\mathscr{S}_\ell$.

Now, write
$$
F_\ell =  \sqrt{\frac{2\ell+1}{\| \mathscr{F}_\ell\|^2_{2} + \mathcal{C}_\ell^2 }} (\hat{\mathscr{F}}_\ell - \mathscr{F}_\ell),
$$
and define $Z_\ell$ to be a centered, non-degenerate Gaussian $\hs^\dag$-valued random variable, with covariance operator $(\|\mathscr{F}_\ell\|^2_{2} + \mathcal{C}_\ell^2  )^{-1}\mathscr{S}_\ell$. See also \cite[Definition 2.1]{campese}.

By \cite[Theorem 3.10 (ii)]{campese}, the bound can be written as follows
\begin{align}\label{eq::d2}
d_2( F_\ell, Z_\ell ) &\le \frac{1+\sqrt{3}}{2\sqrt{3}}\sqrt{ \left ( \frac{12}{2\ell+1} \frac{3 \|\mathscr{F}_\ell\|_4^4 +  \| \mathscr{F}_\ell \|^4_{2}}{\left (\|\mathscr{F}_\ell\|^2_{2} + \mathcal{C}_\ell^2  \right)^2}   + 1  +  \frac{4 \left (  \| \mathscr{F}_\ell\|^4_{4} +  \| \mathscr{F}_\ell\|^4_{2} \right)}{\left (\|\mathscr{F}_\ell\|^2_{2} + \mathcal{C}_\ell^2  \right)^2} \right)  \frac{12}{2\ell+1} \frac{3 \|\mathscr{F}_\ell\|_4^4 +  \| \mathscr{F}_\ell \|^4_{2}}{\left (\|\mathscr{F}_\ell\|^2_{2} + \mathcal{C}_\ell^2  \right)^2}  } \notag\\
&\le  \frac{1+\sqrt{3}}{2\sqrt{3}}\sqrt{ \left ( \frac{12}{2\ell+1} + 3 \right)  \frac{12}{2\ell+1}}.
%&= O \left ( \frac{4 \left (  \| \mathscr{F}_\ell\|^4_{4} +  \| \mathscr{F}_\ell\|^4_{2} \right)}{\left (\|\mathscr{F}_\ell\|^2_{2} + \mathcal{C}_\ell^2  \right)^2} \cdot \frac{1}{2\ell+1} \right).
\end{align}
\end{proof}

\subsection{Quantitative CLT for the reduced power spectrum}

In this section, we will prove high-frequency asymptotics also for the reduced power spectrum $\{\mathcal{C}_\ell, \ell \ge 0 \}$.
To this end, for any integer $\ell \ge 0$, we define $\hat{\mathcal{C}}_\ell$ as 
$$
\hat{\mathcal{C}}_\ell = \frac{1}{2\ell+1} \sum_{m=-\ell}^\ell \|a_{\ell, m}\|_\hilbert^2,
$$
which is an unbiased estimator of $\mathcal{C}_\ell = \mathbb{E}\|a_{\ell,m}\|^2$. Then, we have the following result, which can be directly compared with \cite[Lemma 8.3]{MP:11}.

\begin{theorem}\label{theo::qclt}
For all integers $\ell \ge 0$,
\begin{equation}\label{eq::qclt}
d_{\operatorname{TV}} \left (\sqrt{\frac{2\ell+1}{2\|\mathscr{F}_\ell\|^2_{2}}} (\hat{\mathcal{C}}_\ell - \mathcal{C}_\ell ) , Z\right) \le \sqrt{\frac{8}{2\ell+1}},
\end{equation}
where $Z\sim\mathcal{N}(0,1)$.
\end{theorem}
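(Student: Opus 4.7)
The plan is to recognize that, once each $a_{\ell,m}$ is diagonalized by its Karhunen--Lo\`eve expansion, the standardized estimator $\hat{\mathcal{C}}_\ell-\mathcal{C}_\ell$ is a unit-variance element of the second Wiener chaos generated by a family of i.i.d.\ standard Gaussians, and then to invoke the Nourdin--Peccati fourth-moment bound in total variation. Concretely, writing $a_{\ell,m}=\sum_{j\ge 1}\xi_{j;\ell,m}\,e_{j;\ell}$ with $\xi_{j;\ell,m}=\langle a_{\ell,m},e_{j;\ell}\rangle_{\hilbert}$, the jointly Gaussian assumption together with the pairwise uncorrelatedness of distinct $a_{\ell,m}$'s implies that $\{\xi_{j;\ell,m}/\sqrt{\lambda_{j;\ell}}:\lambda_{j;\ell}>0\}$ is an i.i.d.\ $\mathcal{N}(0,1)$ family. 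Substitution yields
\[
\hat{\mathcal{C}}_\ell-\mathcal{C}_\ell \;=\; \frac{1}{2\ell+1}\sum_{m=-\ell}^{\ell}\sum_{j\ge 1}\lambda_{j;\ell}\,H_2\!\left(\frac{\xi_{j;\ell,m}}{\sqrt{\lambda_{j;\ell}}}\right),
\]
so that $F_\ell:=\sqrt{(2\ell+1)/(2\|\mathscr{F}_\ell\|_2^2)}(\hat{\mathcal{C}}_\ell-\mathcal{C}_\ell)$ belongs to the second Wiener chaos $\mathcal{H}_2$.

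A one-line variance calculation using $\operatorname{Var}(\xi_{j;\ell,m}^2-\lambda_{j;\ell})=2\lambda_{j;\ell}^2$ and independence across $(j,m)$ shows $\mathbb{E}[F_\ell^2]=1$. I would then apply the second-chaos total-variation bound (see, e.g., \cite[Chapter 5]{noupebook}): for any $F\in\mathcal{H}_2$ with $\mathbb{E}[F^2]=1$,
\[
d_{\operatorname{TV}}(F,Z)\;\le\;2\sqrt{\kappa_4(F)/6}.
\]
The remaining task is thus to compute (or bound) the fourth cumulant of $F_\ell$. Using $\kappa_4(Z^2-1)=48$ together with independence of the summands, the fourth cumulant of the unnormalized sum equals $(2\ell+1)\cdot 48\sum_{j}\lambda_{j;\ell}^4=48(2\ell+1)\|\mathscr{F}_\ell\|_4^4$, and dividing by $\bigl(2(2\ell+1)\|\mathscr{F}_\ell\|_2^2\bigr)^2$ gives
\[
\kappa_4(F_\ell)\;=\;\frac{12\,\|\mathscr{F}_\ell\|_4^4}{(2\ell+1)\,\|\mathscr{F}_\ell\|_2^4}\;\le\;\frac{12}{2\ell+1},
\]
where the inequality is the elementary Schatten monotonicity $\sum_j\lambda_{j;\ell}^4\le\bigl(\sum_j\lambda_{j;\ell}^2\bigr)^2$. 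Combining the two displays yields $d_{\operatorname{TV}}(F_\ell,Z)\le 2\sqrt{2/(2\ell+1)}=\sqrt{8/(2\ell+1)}$, which is exactly \eqref{eq::qclt}.

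The analysis is purely scalar once the KL coordinates are introduced, so no real obstacle is expected in the computation itself; the main point requiring care is just citing the correct TV-version of the fourth-moment theorem for the second chaos with the constant $2\sqrt{1/6}$ (this is a now-standard consequence of Stein's method combined with Malliavin calculus, but one must make sure to use the TV form rather than the Wasserstein or $d_2$ form). One could alternatively reach the same bound by writing $F_\ell$ explicitly as a double integral $I_2(f_\ell)$ and estimating the contraction norm $\|f_\ell\otimes_1 f_\ell\|$, but the cumulant route above is shorter and makes the appearance of $\|\mathscr{F}_\ell\|_4^4$ transparent.
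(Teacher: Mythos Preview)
Your proposal is correct and follows essentially the same route as the paper: both diagonalize $a_{\ell,m}$ via its Karhunen--Lo\`eve expansion to express $\hat{\mathcal{C}}_\ell-\mathcal{C}_\ell$ as a second-chaos element, compute the variance $2\|\mathscr{F}_\ell\|_2^2/(2\ell+1)$ and the fourth cumulant $48\|\mathscr{F}_\ell\|_4^4/(2\ell+1)^3$, bound the ratio $\|\mathscr{F}_\ell\|_4^4/\|\mathscr{F}_\ell\|_2^4\le 1$, and then invoke the Nourdin--Peccati total-variation bound $d_{\operatorname{TV}}\le 2\sqrt{\kappa_4/6}$ for the second chaos. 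Your write-up is in fact more explicit than the paper's about the constant in the TV bound; otherwise the arguments coincide.
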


\begin{proof}[Proof of Theorem \ref{theo::qclt}]
To prove the quantitative central limit theorem \eqref{eq::qclt}, without loss of generality, we assume that $\mathscr{F}_\ell$ is positive definite, for any integer $\ell \ge 0$. First of all, note that if $a_{\ell,m}$ is a Gaussian $\hilbert$-valued random variable, then $\langle a_{\ell,m},  e_{j;\ell} \rangle_\hilbert \sim \mathcal{N}(0, \lambda_{j;\ell})$ and the centered squared norm
$$
\|a_{\ell,m} \|^2_\hilbert - \mathcal{C}_\ell = \sum_{j=1}^\infty  \left ( \langle a_{\ell,m},  e_{j;\ell} \rangle^2_\hilbert - \lambda_{j;\ell} \right)
$$ 
is in the second-order Wiener chaos of $L^2(\Omega;\mathbb{R})$, as well as $\hat{\mathcal{C}}_\ell - \mathcal{C}_\ell$.

Then, we have that
$$
\mathbb{E} | \sqrt{2\ell+1}(\hat{\mathcal{C}}_\ell - \mathcal{C}_\ell) |^2 = \operatorname{Var} \|a_{\ell, m}\|_\hilbert^2 = \mathbb{E} \|a_{\ell, m}\|_\hilbert^4 - \left ( \mathbb{E}\|a_{\ell, m}\|_\hilbert^2\right)^2  = 2 \|\mathscr{F}_\ell \|^2_{2}.
$$
Moreover, we can compute the following 4-th cumulant
\begin{align*}
\operatorname{Cum}_4 [\sqrt{2\ell+1} (\hat{\mathcal{C}}_\ell - \mathcal{C}_\ell )  ] &= \frac{1}{(2\ell+1)^2}\operatorname{Cum}_4\left [ \sum_{m=-\ell}^\ell \left (\|a_{\ell, m}\|_\hilbert^2  - \mathcal{C}_\ell \right)\right]\\
&=\frac{1}{(2\ell+1)^2}  \sum_{m=-\ell}^\ell \operatorname{Cum}_4\left [ \|a_{\ell, m}\|_\hilbert^2  - \mathcal{C}_\ell  \right ]\\
&= \frac{1}{(2\ell+1)^2}  \sum_{m=-\ell}^\ell \operatorname{Cum}_4\left [ \sum_j \lambda_{j;\ell}  H_2\left( \frac{\langle a_{\ell,m}, e_{j} \rangle_\hilbert}{\sqrt{\lambda_{j;\ell}}} \right)\right]\\
&=\frac{48}{2\ell+1} \sum_{j} \lambda_{j;\ell}^4,
\end{align*}
see also Proposition 2.7.13, \cite{noupebook}. Hence,
$$
\operatorname{Cum}_4 \left[ \sqrt{\frac{2\ell+1}{2\|\mathscr{F}_\ell\|^2_{2}}} (\hat{\mathcal{C}}_\ell - \mathcal{C}_\ell )  \right] = \frac{12}{2\ell+1} \frac{\|\mathscr{F}_\ell\|_4^4}{\|\mathscr{F}_\ell\|_2^4} \le \frac{12}{2\ell+1} ,
$$
and we have the result.
\end{proof}

\section*{Appendix}

In this Appendix we state a general result on covariance and cross-covariance operators. This result is standard, but we decided to include it in the paper for completeness.

Suppose that we have two zero-mean $\hilbert$-valued random variables $T_1$ and $T_2$ defined on the same
probability space $(\Omega, \mathfrak{F}, \operatorname{P})$. Assume that, for $i = 1, 2$, $\mathbb{E}\|T_i\|_\hilbert^2 < \infty$ and consider the covariance and cross-covariance operators
$$
\mathscr{R}_{ij} = \int_\Omega T_i \otimes T_j\, d\operatorname{P}, \qquad i,j=1,2.
$$

\begin{lemma}\label{lemma::trace}
If $\mathscr{R}_{12}$ is self-adjoint, then $\mathscr{R}_{12}=\mathscr{R}_{21}$ and
$$
\|\mathscr{R}_{12}\|_1 \le \|\mathscr{R}_{11}\|^{1/2}_1 \|\mathscr{R}_{22}\|^{1/2}_1.
$$
\end{lemma}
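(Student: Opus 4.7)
The plan has two independent parts: establishing the identity $\mathscr{R}_{12}=\mathscr{R}_{21}$ and proving the trace-norm inequality.

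First I would handle the identity. Since the adjoint of a rank-one tensor is obtained by swapping its factors, the elementary calculation $\langle (u\otimes v)f, g\rangle_\hilbert = \langle f, v\rangle_\hilbert \langle u, g\rangle_\hilbert = \langle f, (v\otimes u)g\rangle_\hilbert$ shows $(u\otimes v)^* = v\otimes u$. Taking adjoints commutes with the Bochner integral defining $\mathscr{R}_{12}$, so $\mathscr{R}_{12}^*=\mathscr{R}_{21}$. Under the self-adjointness hypothesis $\mathscr{R}_{12}=\mathscr{R}_{12}^*=\mathscr{R}_{21}$.

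For the inequality I would use the duality characterization of the trace norm already invoked in the proof of Theorem~\ref{theo::schoenberg}:
$$
\|\mathscr{R}_{12}\|_1 \;=\; \sup_{D\in\mathcal{D}} |\operatorname{Tr}(D\mathscr{R}_{12})|, \qquad \mathcal{D}=\{D\in\mathcal{L}(\hilbert) \text{ compact},\, \|D\|_\infty\le 1\}.
$$
For any such $D$, since $D(u\otimes v)=(Du)\otimes v$ and $\operatorname{Tr}(a\otimes b)=\langle a,b\rangle_\hilbert$, one has the pointwise identity $\operatorname{Tr}(D(T_1\otimes T_2))=\langle DT_1,T_2\rangle_\hilbert$. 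Pulling the trace through the Bochner integral (justified since $\|D(T_1\otimes T_2)\|_1 \le \|T_1\|_\hilbert\|T_2\|_\hilbert$ is $\operatorname{P}$-integrable by Cauchy--Schwarz), I obtain
$$
\operatorname{Tr}(D\mathscr{R}_{12})=\mathbb{E}\langle DT_1,T_2\rangle_\hilbert.
$$
Then two applications of Cauchy--Schwarz give
$$
|\operatorname{Tr}(D\mathscr{R}_{12})| \le \|D\|_\infty\, \mathbb{E}[\|T_1\|_\hilbert\|T_2\|_\hilbert] \le \|D\|_\infty\,(\mathbb{E}\|T_1\|_\hilbert^2)^{1/2}(\mathbb{E}\|T_2\|_\hilbert^2)^{1/2}.
$$
Taking the supremum over $D\in\mathcal{D}$ and using the identity $\|\mathscr{R}_{ii}\|_1 = \operatorname{Tr}(\mathscr{R}_{ii}) = \mathbb{E}\|T_i\|_\hilbert^2$ (valid because each $\mathscr{R}_{ii}$ is a nonnegative-definite trace-class operator) finishes the proof.

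The step that requires the most care is the pointwise identity $\operatorname{Tr}(D(T_1\otimes T_2))=\langle DT_1,T_2\rangle_\hilbert$ together with its interchange with the Bochner integral; beyond that the argument is a clean double Cauchy--Schwarz. Notice that self-adjointness of $\mathscr{R}_{12}$ is used only for the first claim, and that the trace-norm bound in fact holds without it.
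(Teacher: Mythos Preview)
Your proof is correct, but it follows a genuinely different route from the paper's.

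The paper exploits the self-adjointness of $\mathscr{R}_{12}$ a second time: it invokes the spectral theorem to obtain an eigenbasis $\{e_j\}$ of $\mathscr{R}_{12}$, writes $\|\mathscr{R}_{12}\|_1 = \sum_j |\langle \mathscr{R}_{12}e_j,e_j\rangle_\hilbert|$ in that basis, applies the covariance Cauchy--Schwarz inequality $|\langle \mathscr{R}_{12}e_j,e_j\rangle_\hilbert| \le \langle \mathscr{R}_{11}e_j,e_j\rangle_\hilbert^{1/2}\langle \mathscr{R}_{22}e_j,e_j\rangle_\hilbert^{1/2}$ termwise, and finishes with the discrete Cauchy--Schwarz on the resulting sum.

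Your argument bypasses the spectral decomposition entirely, working instead through trace-norm duality and the probabilistic representation $\operatorname{Tr}(D\mathscr{R}_{12})=\mathbb{E}\langle DT_1,T_2\rangle_\hilbert$, followed by Cauchy--Schwarz in $\hilbert$ and in $L^2(\Omega)$. This is more direct, and your closing remark is on point: the paper's argument genuinely needs self-adjointness to identify the trace norm with the diagonal sum in the eigenbasis, whereas your bound holds for arbitrary cross-covariance operators. In fact your detour through duality can be shortened further: since $\omega\mapsto T_1(\omega)\otimes T_2(\omega)$ is Bochner integrable in $\mathcal{S}_1$, the triangle inequality for Bochner integrals already gives $\|\mathscr{R}_{12}\|_1 \le \mathbb{E}\|T_1\otimes T_2\|_1 = \mathbb{E}[\|T_1\|_\hilbert\|T_2\|_\hilbert]$, after which a single Cauchy--Schwarz in $L^2(\Omega)$ finishes the job.
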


\begin{proof}[Proof of Lemma \ref{lemma::trace}]
We know from \cite[Theorem 7.2.9 (3)]{Hsing} that the adjoint of $\mathscr{R}_{12}$ is $\mathscr{R}_{21}$, thus it follows that $\mathscr{R}_{12}=\mathscr{R}_{21}$. We also know that $\mathscr{R}_{12}$ belongs to $\hs$ and hence it is compact. The spectral theorem for compact and self-adjoint operators entails that there exists a CONS $\{e_j, j \in \mathbb{N}\}$ composed of eigenvectors of $\mathscr{R}_{12}$. Now, since $\mathscr{R}_{12}$ is self-adjoint we have that
$$
\|\mathscr{R}_{12}\|_1 = \sum_{j=1}^\infty |\langle \mathscr{R}_{12}e_j, e_j \rangle_\hilbert|.
$$
In addition, since $\{e_j, j \in \mathbb{N}\}$ is a CONS and the covariance operators $\mathscr{R}_{ii}, \ i=1,2,$ are nonnegative definite, we can write
$$
\|\mathscr{R}_{ii}\|_1 =  \sum_{j=1}^\infty \langle \mathscr{R}_{ii}e_j, e_j \rangle_\hilbert, \qquad i=1,2.
$$
Moreover, from \cite[Theorem 7.2.9 (2)]{Hsing} we have that
$$|\langle \mathscr{R}_{12}e_j, e_j \rangle_\hilbert| \le |\langle \mathscr{R}_{11}e_j, e_j \rangle_\hilbert|^{1/2} |\langle \mathscr{R}_{22}e_j, e_j \rangle_\hilbert|^{1/2}, \qquad j \in \mathbb{N},$$ 
and thus the result follows by applying Cauchy-Schwartz inequality.
\end{proof}

\bibliographystyle{imsart-number}
\bibliography{mybiblio}

\begin{thebibliography}{31}
% BibTex style file: imsart-number.bst, 2017-11-03
% Default style options (sort=1,type=number).
% Used options (sort=1,type=number).

\bibitem{bergporcu}
\begin{barticle}[author]
\bauthor{\bsnm{Berg},~\bfnm{C.}\binits{C.}} \AND
  \bauthor{\bsnm{Porcu},~\bfnm{E.}\binits{E.}}
(\byear{2017}).
\btitle{From {S}choenberg coefficients to {S}choenberg functions}.
\bjournal{Constructive Approximation}
\bvolume{45}
\bpages{217--241}.
\end{barticle}
\endbibitem

\bibitem{campese}
\begin{barticle}[author]
\bauthor{\bsnm{Bourguin},~\bfnm{Solesne}\binits{S.}} \AND
  \bauthor{\bsnm{Campese},~\bfnm{Simon}\binits{S.}}
(\byear{2020}).
\btitle{Approximation of {H}ilbert-valued {G}aussians on {D}irichlet
  structures}.
\bjournal{Electronic Journal of Probability}
\bvolume{25}
\bpages{1--30}.
\bdoi{10.1214/20-EJP551}
\end{barticle}
\endbibitem

\bibitem{brockwelldavis}
\begin{bbook}[author]
\bauthor{\bsnm{Brockwell},~\bfnm{P.~J.}\binits{P.~J.}} \AND
  \bauthor{\bsnm{Davis},~\bfnm{R.~A.}\binits{R.~A.}}
(\byear{1991}).
\btitle{Time Series: Theory and Methods},
\bedition{2nd} ed.
\bseries{Springer Series in Statistics}.
\bpublisher{Springer-Verlag}.
\end{bbook}
\endbibitem

\bibitem{CM:18b}
\begin{barticle}[author]
\bauthor{\bsnm{Cammarota},~\bfnm{V.}\binits{V.}} \AND
  \bauthor{\bsnm{Marinucci},~\bfnm{D.}\binits{D.}}
(\byear{2018}).
\btitle{A quantitative central limit theorem for the {Euler}-{Poincar\'{e}}
  characteristic of random spherical eigenfunctions}.
\bjournal{The Annals of Probability}
\bvolume{46}
\bpages{3188--3288}.
\end{barticle}
\endbibitem

\bibitem{SPHARMA}
\begin{barticle}[author]
\bauthor{\bsnm{Caponera},~\bfnm{A.}\binits{A.}}
(\byear{2021}).
\btitle{{SPHARMA} approximations for stationary functional time series on the
  sphere}.
\bjournal{Statistical Inference for Stochastic Processes}
\bvolume{24}
\bpages{609--634}.
\end{barticle}
\endbibitem

\bibitem{cdv19}
\begin{barticle}[author]
\bauthor{\bsnm{Caponera},~\bfnm{Alessia}\binits{A.}},
  \bauthor{\bsnm{Durastanti},~\bfnm{Claudio}\binits{C.}} \AND
  \bauthor{\bsnm{Vidotto},~\bfnm{Anna}\binits{A.}}
(\byear{2021}).
\btitle{{LASSO} estimation for spherical autoregressive processes}.
\bjournal{Stochastic Processes and their Applications}
\bvolume{137}
\bpages{167--199}.
\end{barticle}
\endbibitem

\bibitem{CFSP:22}
\begin{barticle}[author]
\bauthor{\bsnm{Caponera},~\bfnm{Alessia}\binits{A.}},
  \bauthor{\bsnm{Fageot},~\bfnm{Julien}\binits{J.}},
  \bauthor{\bsnm{Simeoni},~\bfnm{Matthieu}\binits{M.}} \AND
  \bauthor{\bsnm{Panaretos},~\bfnm{Victor~M.}\binits{V.~M.}}
(\byear{2022}).
\btitle{{Functional estimation of anisotropic covariance and autocovariance
  operators on the sphere}}.
\bjournal{Electronic Journal of Statistics}
\bvolume{16}
\bpages{5080--5148}.
\bdoi{10.1214/22-EJS2064}
\end{barticle}
\endbibitem

\bibitem{cm19}
\begin{barticle}[author]
\bauthor{\bsnm{Caponera},~\bfnm{Alessia}\binits{A.}} \AND
  \bauthor{\bsnm{Marinucci},~\bfnm{Domenico}\binits{D.}}
(\byear{2021}).
\btitle{{Asymptotics for spherical functional autoregressions}}.
\bjournal{The Annals of Statistics}
\bvolume{49}
\bpages{346--369}.
\end{barticle}
\endbibitem

\bibitem{CGR:22}
\begin{barticle}[author]
\bauthor{\bsnm{Caramellino},~\bfnm{Lucia}\binits{L.}},
  \bauthor{\bsnm{Giorgio},~\bfnm{Giacomo}\binits{G.}} \AND
  \bauthor{\bsnm{Rossi},~\bfnm{Maurizia}\binits{M.}}
(\byear{2022}).
\btitle{{Convergence in Total Variation for nonlinear functionals of random
  hyperspherical harmonics}}.
\bjournal{arXiv:2206.02605}.
\end{barticle}
\endbibitem

\bibitem{DRD:astro}
\begin{barticle}[author]
\bauthor{\bsnm{Dick},~\bfnm{Jason}\binits{J.}},
  \bauthor{\bsnm{Remazeilles},~\bfnm{Mathieu}\binits{M.}} \AND
  \bauthor{\bsnm{Delabrouille},~\bfnm{Jacques}\binits{J.}}
(\byear{2010}).
\btitle{{Impact of calibration errors on CMB component separation using FastICA
  and ILC}}.
\bjournal{Monthly Notices of the Royal Astronomical Society}
\bvolume{401}
\bpages{1602--1612}.
\bdoi{10.1111/j.1365-2966.2009.15798.x}
\end{barticle}
\endbibitem

\bibitem{gneiting2}
\begin{barticle}[author]
\bauthor{\bsnm{Gneiting},~\bfnm{Tilmann}\binits{T.}}
(\byear{2013}).
\btitle{Strictly and non-strictly positive definite functions on spheres}.
\bjournal{Bernoulli}
\bvolume{19}
\bpages{1327--1349}.
\end{barticle}
\endbibitem

\bibitem{Hsing}
\begin{bbook}[author]
\bauthor{\bsnm{Hsing},~\bfnm{Tailen}\binits{T.}} \AND
  \bauthor{\bsnm{Eubank},~\bfnm{Randall}\binits{R.}}
(\byear{2015}).
\btitle{Theoretical Foundations of Functional Data Analysis, with an
  Introduction to Linear Operators}.
\bpublisher{John Wiley \& Sons}.
\end{bbook}
\endbibitem

\bibitem{langschwab}
\begin{barticle}[author]
\bauthor{\bsnm{Lang},~\bfnm{A.}\binits{A.}} \AND
  \bauthor{\bsnm{Schwab},~\bfnm{C.}\binits{C.}}
(\byear{2015}).
\btitle{Isotropic {G}aussian random fields on the sphere: Regularity, fast
  simulation and stochastic partial differential equations}.
\bjournal{The Annals of Applied Probability}
\bvolume{25}
\bpages{3047--3094}.
\end{barticle}
\endbibitem

\bibitem{MP:ergodicity}
\begin{barticle}[author]
\bauthor{\bsnm{Marinucci},~\bfnm{Domenico}\binits{D.}} \AND
  \bauthor{\bsnm{Peccati},~\bfnm{Giovanni}\binits{G.}}
(\byear{2010}).
\btitle{Ergodicity and Gaussianity for spherical random fields}.
\bjournal{Journal of Mathematical Physics}
\bvolume{51}
\bpages{043301}.
\bdoi{10.1063/1.3329423}
\end{barticle}
\endbibitem

\bibitem{MP:11}
\begin{bbook}[author]
\bauthor{\bsnm{Marinucci},~\bfnm{D.}\binits{D.}} \AND
  \bauthor{\bsnm{Peccati},~\bfnm{G.}\binits{G.}}
(\byear{2011}).
\btitle{Random Fields on the Sphere: Representation, Limit Theorems and
  Cosmological Applications}.
\bseries{London Mathematical Society Lecture Note Series}.
\bpublisher{Cambridge University Press}.
\end{bbook}
\endbibitem

\bibitem{marinucci2013mean}
\begin{barticle}[author]
\bauthor{\bsnm{Marinucci},~\bfnm{Domenico}\binits{D.}} \AND
  \bauthor{\bsnm{Peccati},~\bfnm{Giovanni}\binits{G.}}
(\byear{2013}).
\btitle{Mean-square continuity on homogeneous spaces of compact groups}.
\bjournal{Electronic Communications in Probability}
\bvolume{18}
\bpages{1--10}.
\end{barticle}
\endbibitem

\bibitem{MR:15}
\begin{barticle}[author]
\bauthor{\bsnm{Marinucci},~\bfnm{Domenico}\binits{D.}} \AND
  \bauthor{\bsnm{Rossi},~\bfnm{Maurizia}\binits{M.}}
(\byear{2015}).
\btitle{Stein–Malliavin approximations for nonlinear functionals of random
  eigenfunctions on $\mathbb{S}^d$}.
\bjournal{Journal of Functional Analysis}
\bvolume{268}
\bpages{2379--2420}.
\bdoi{https://doi.org/10.1016/j.jfa.2015.02.004}
\end{barticle}
\endbibitem

\bibitem{MRW:20}
\begin{barticle}[author]
\bauthor{\bsnm{Marinucci},~\bfnm{D.}\binits{D.}},
  \bauthor{\bsnm{Rossi},~\bfnm{M.}\binits{M.}} \AND
  \bauthor{\bsnm{Wigman},~\bfnm{I.}\binits{I.}}
(\byear{2020}).
\btitle{The asymptotic equivalence of the sample trispectrum and the nodal
  length for random spherical harmonics}.
\bjournal{Annales de l'Institut Henri Poincar\'{e} Probabilit\'{e}s et
  Statistiques}
\bvolume{56}
\bpages{374--390}.
\end{barticle}
\endbibitem

\bibitem{MW:11}
\begin{barticle}[author]
\bauthor{\bsnm{Marinucci},~\bfnm{Domenico}\binits{D.}} \AND
  \bauthor{\bsnm{Wigman},~\bfnm{Igor}\binits{I.}}
(\byear{2011}).
\btitle{The defect variance of random spherical harmonics}.
\bjournal{Journal of Physics A: Mathematical and Theoretical}
\bvolume{44}
\bpages{355206}.
\bdoi{10.1088/1751-8113/44/35/355206}
\end{barticle}
\endbibitem

\bibitem{MW:14}
\begin{barticle}[author]
\bauthor{\bsnm{Marinucci},~\bfnm{Domenico}\binits{D.}} \AND
  \bauthor{\bsnm{Wigman},~\bfnm{Igor}\binits{I.}}
(\byear{2014}).
\btitle{On nonlinear functionals of random spherical eigenfunctions}.
\bjournal{Communications in Mathematical Physics}
\bvolume{327}
\bpages{849–872}.
\bdoi{10.1007/s00220-014-1939-7}
\end{barticle}
\endbibitem

\bibitem{noupebook}
\begin{bbook}[author]
\bauthor{\bsnm{Nourdin},~\bfnm{I.}\binits{I.}} \AND
  \bauthor{\bsnm{Peccati},~\bfnm{G.}\binits{G.}}
(\byear{2012}).
\btitle{Normal Approximations with Malliavin Calculus: from Stein's Method to
  Universality}.
\bpublisher{Cambridge University Press}.
\end{bbook}
\endbibitem

\bibitem{Panaretos2}
\begin{barticle}[author]
\bauthor{\bsnm{Panaretos},~\bfnm{Victor~M}\binits{V.~M.}} \AND
  \bauthor{\bsnm{Tavakoli},~\bfnm{Shahin}\binits{S.}}
(\byear{2013}).
\btitle{Fourier analysis of stationary time series in function space}.
\bjournal{The Annals of Statistics}
\bvolume{41}
\bpages{568--603}.
\end{barticle}
\endbibitem

\bibitem{Panaretos}
\begin{barticle}[author]
\bauthor{\bsnm{Panaretos},~\bfnm{Victor~M}\binits{V.~M.}} \AND
  \bauthor{\bsnm{Tavakoli},~\bfnm{Shahin}\binits{S.}}
(\byear{2013}).
\btitle{Cram{\'e}r--{K}arhunen--{L}o{\`e}ve representation and harmonic
  principal component analysis of functional time series}.
\bjournal{Stochastic Processes and their Applications}
\bvolume{123}
\bpages{2779--2807}.
\end{barticle}
\endbibitem

\bibitem{PFN:20}
\begin{barticle}[author]
\bauthor{\bsnm{Porcu},~\bfnm{Emilio}\binits{E.}},
  \bauthor{\bsnm{Furrer},~\bfnm{Reinhard}\binits{R.}} \AND
  \bauthor{\bsnm{Nychka},~\bfnm{Douglas}\binits{D.}}
(\byear{2020}).
\btitle{30 years of space-time covariance functions}.
\bjournal{WIREs Computational Statistics}
\bpages{e1512}.
\end{barticle}
\endbibitem

\bibitem{Ro:19}
\begin{barticle}[author]
\bauthor{\bsnm{Rossi},~\bfnm{Maurizia}\binits{M.}}
(\byear{2019}).
\btitle{The defect of random hyperspherical harmonics}.
\bjournal{Journal of Theoretical Probability}
\bvolume{32}
\bpages{2135--2165}.
\bdoi{10.1007/s10959-018-0849-6}
\end{barticle}
\endbibitem

\bibitem{schoenberg}
\begin{barticle}[author]
\bauthor{\bsnm{Schoenberg},~\bfnm{I.~J.}\binits{I.~J.}}
(\byear{1942}).
\btitle{Positive definite functions on spheres}.
\bjournal{Duke Mathematical Journal}
\bvolume{9}
\bpages{96--108}.
\end{barticle}
\endbibitem

\bibitem{szego}
\begin{bbook}[author]
\bauthor{\bsnm{Szeg\"{o}},~\bfnm{G.}\binits{G.}}
(\byear{1975}).
\btitle{Orthogonal Polynomials},
\bedition{4th} ed.
\bseries{American Mathematical Society Colloquium Publications}
\bvolume{23}.
\bpublisher{American Mathematical Society}.
\end{bbook}
\endbibitem

\bibitem{To:19}
\begin{barticle}[author]
\bauthor{\bsnm{Todino},~\bfnm{Anna~Paola}\binits{A.~P.}}
(\byear{2019}).
\btitle{A quantitative central limit theorem for the excursion area of random
  spherical harmonics over subdomains of $\mathbb{S}^2$}.
\bjournal{Journal of Mathematical Physics}
\bvolume{60}
\bpages{023505}.
\end{barticle}
\endbibitem

\bibitem{To:20}
\begin{barticle}[author]
\bauthor{\bsnm{Todino},~\bfnm{Anna~Paola}\binits{A.~P.}}
(\byear{2020}).
\btitle{{Nodal lengths in shrinking domains for random eigenfunctions on
  $\mathbb{S}^{2}$}}.
\bjournal{Bernoulli}
\bvolume{26}
\bpages{3081--3110}.
\bdoi{10.3150/20-BEJ1216}
\end{barticle}
\endbibitem

\bibitem{Zi:17}
\begin{barticle}[author]
\bauthor{\bsnm{Trübner},~\bfnm{Mara}\binits{M.}} \AND
  \bauthor{\bsnm{Ziegel},~\bfnm{Johanna}\binits{J.}}
(\byear{2017}).
\btitle{Derivatives of isotropic positive definite functions on spheres}.
\bjournal{Proceedings of the American Mathematical Society}
\bvolume{145}
\bpages{3017--3031}.
\bdoi{10.1090/proc/13561}
\end{barticle}
\endbibitem

\bibitem{Zi:14}
\begin{barticle}[author]
\bauthor{\bsnm{Ziegel},~\bfnm{Johanna}\binits{J.}}
(\byear{2014}).
\btitle{Convolution roots and differentiability of isotropic positive definite
  functions on spheres}.
\bjournal{Proceedings of the American Mathematical Society}
\bvolume{142}
\bpages{2063--2077}.
\end{barticle}
\endbibitem

\end{thebibliography}
\end{document}